%%
%% This is file `elsarticle-template-num.tex',
%% generated with the docstrip utility.
%%
%% The original source files were:
%%
%% elsarticle.dtx  (with options: `numtemplate')
%%
%% Copyright 2007, 2008 Elsevier Ltd.
%%
%% This file is part of the 'Elsarticle Bundle'.
%% -------------------------------------------
%%
%% It may be distributed under the conditions of the LaTeX Project Public
%% License, either version 1.2 of this license or (at your option) any
%% later version.  The latest version of this license is in
%%    http://www.latex-project.org/lppl.txt
%% and version 1.2 or later is part of all distributions of LaTeX
%% version 1999/12/01 or later.
%%
%% The list of all files belonging to the 'Elsarticle Bundle' is
%% given in the file `manifest.txt'.
%%

%% Template article for Elsevier's document class `elsarticle'
%% with numbered style bibliographic references
%% SP 2008/03/01

\documentclass[preprint,12pt]{elsarticle}

%% Use the option review to obtain double line spacing
%% \documentclass[authoryear,preprint,review,12pt]{elsarticle}

%% Use the options 1p,twocolumn; 3p; 3p,twocolumn; 5p; or 5p,twocolumn
%% for a journal layout:
%% \documentclass[final,1p,times]{elsarticle}
%% \documentclass[final,1p,times,twocolumn]{elsarticle}
%% \documentclass[final,3p,times]{elsarticle}
%% \documentclass[final,3p,times,twocolumn]{elsarticle}
%% \documentclass[final,5p,times]{elsarticle}
%% \documentclass[final,5p,times,twocolumn]{elsarticle}

%% if you use PostScript figures in your article
%% use the graphics package for simple commands
%% \usepackage{graphics}
%% or use the graphicx package for more complicated commands
%% \usepackage{graphicx}
%% or use the epsfig package if you prefer to use the old commands
%% \usepackage{epsfig}

%% The amssymb package provides various useful mathematical symbols
\usepackage{amssymb}

\usepackage{amsmath}
  \usepackage{paralist}
  \usepackage{graphics} %% add this and next lines if pictures should be in esp format
  \usepackage{epsfig} %For pictures: screened artwork should be set up with an 85 or 100 line screen
 \usepackage[colorlinks=true]{hyperref}

\allowdisplaybreaks[4]

\usepackage{graphicx,amssymb,mathrsfs,latexsym,amsthm}
\usepackage{verbatim}

\newtheorem{theorem}{Theorem}
\newtheorem{lemma}[theorem]{Lemma}

\newtheorem{proposition}[theorem]{Proposition}
\newtheorem{remark}[theorem]{Remark}
\newtheorem{definition}[theorem]{Definition}

%% The amsthm package provides extended theorem environments
%% \usepackage{amsthm}

%% The lineno packages adds line numbers. Start line numbering with
%% \begin{linenumbers}, end it with \end{linenumbers}. Or switch it on
%% for the whole article with \linenumbers.
%% \usepackage{lineno}

%\journal{Nuclear Physics B}

\begin{document}

\begin{frontmatter}

%% Title, authors and addresses

%% use the tnoteref command within \title for footnotes;
%% use the tnotetext command for theassociated footnote;
%% use the fnref command within \author or \address for footnotes;
%% use the fntext command for theassociated footnote;
%% use the corref command within \author for corresponding author footnotes;
%% use the cortext command for theassociated footnote;
%% use the ead command for the email address,
%% and the form \ead[url] for the home page:
%% \title{Title\tnoteref{label1}}
%% \tnotetext[label1]{}
%% \author{Name\corref{cor1}\fnref{label2}}
%% \ead{email address}
%% \ead[url]{home page}
%% \fntext[label2]{}
%% \cortext[cor1]{}
%% \address{Address\fnref{label3}}
%% \fntext[label3]{}

\title{Pre-image Variational Principle for Bundle Random Dynamical Systems}

%% use optional labels to link authors explicitly to addresses:
%% \author[label1,label2]{}
%% \address[label1]{}
%% \address[label2]{}

\author[Shanghai,Nanjing]{Xianfeng Ma}
\ead{xianfengma@gmail.com}
\author[Nanjing,NJU]{Ercai Chen}
\ead{ecchen@njnu.edu.cn}
\address[Shanghai]{Department of Mathematics, East China University of Science and
Technology\\ Shanghai 200237, China}
\address[Nanjing]{School of Mathematics and Computer Science, Nanjing Normal
University\\Nanjing 210097, China}
\address[NJU]{Center of Nonlinear Science, Nanjing University\\Nanjing 210093,
China}

\begin{abstract}
%% Text of abstract
The pre-image topological pressure is defined for bundle random
dynamical systems. A variational principle for it has also been
given.
\end{abstract}

\begin{keyword}
%% keywords here, in the form: keyword \sep keyword
Random dynamical systems \sep Variational principle \sep Pre-image
topological pressure
%% PACS codes here, in the form: \PACS code \sep code

%% MSC codes here, in the form: \MSC code \sep code
%% or \MSC[2008] code \sep code (2000 is the default)
\MSC 37D35 \sep 37A35 \sep 37H99
\end{keyword}

\end{frontmatter}

%% \linenumbers

%% main text
\section{Introduction}
%\label{}
In deterministic dynamical systems, the thermodynamic formalism
based on the notions of pressure, of Gibbs and equilibrium states
plays a fundamental role in statistic mechanics, ergodic theory and
dimension  theory \cite{Ruelle1978,Bowen,Zelik2008,Pesin}. The
background for the study of equilibrium states is an appropriate
form of the variational principle. Its  first  version was
formulated by Ruelle \cite{Ruelle}.   In random dynamical systems
(RDS), the thermodynamic formalism is also important in the study of
chaotic properties of random transformations
\cite{Denker2008,Kifer1996,Bogen1995,Kifer1992}. The first version
of the variational principle for random transformations was given by
Ledrappier and Walters in the framework of the relativized ergodic
theory \cite{LedWal}, and it was extended by Bogensch{\"u}tz
\cite{Bogen} to random transformations acting on one place. Later
Kifer \cite{Kifer2001} gave the variational principle for random
bundle transformations.

In recent years, the pre-image structure of a map has also been
studied by many authors
\cite{Fiebig,Hurley,Langevin1,Langevin2,Cheng2005,Cheng2007,Nitecki}.
In deterministic dynamical systems, Fiebig \cite{Fiebig} studied the
relation between the classical topological entropy and the
dispersion of pre-images. Cheng and Newhouse \cite{Cheng2005}
introduced the notions of the pre-image entropies and obtained a
variational principle which is similar to the standard one. Zeng
\cite{Fanping} defined the notion of the pre-image pressure and
investigated its relationship with invariant measures. He also
established a variational principle for the pre-image pressure,
which was a generalization of Cheng's result for the pre-image
entropy. In random dynamical systems, Zhu \cite{Zhu} introduced the
analogous notions as that in the deterministic case and gave the
analogs of many known results for entropies, such as
Shannon-McMillan-Breiman Theorem, the Kolmogorov-Sinai Theorem, the
Abromov-Rokhlin formula and the variational principle.

In this paper, we present the notion of the pre-image topological
pressure and derive the corresponding variational principle for
bundle random dynamical systems. In fact, we formulate a random
variational principle between the pre-image topological pressure,
the pre-image measure-theoretic entropy and some functions of the
invariant measure. We also introduce a revised definition of the
random pre-image topological entropy without any additional
assumptions, while the original notion defined in \cite{Zhu} need a
strong measurability condition. All results in \cite{Zhu} still hold
for our new notion. For the probability space consisting of a single
point,  we establish the pre-image variational principle  on any
compact invariant subset for deterministic dynamical systems, which
is a generalization of Zeng's result \cite{Fanping} for the whole
space. The method we use is in the framework of Misiurewicz's
elegant proof \cite{Misiurewicz}. Kifer's method \cite{Kifer} and
Cheng's technique \cite{Cheng2005} are also adopted in the argument
of our theorem. In fact, our proof generalizes Kifer's proof of the
standard variational principle for random bundle transformations.

This paper is organized as follows. In Section \ref{Sec2}, we define
the pre-image measure-theoretic  entropy as a conditional entropy of
the induced skew product transformation  and give  another fiberwise
expression for bundle random dynamical systems. In Section
\ref{Sec3}, we define the pre-image topological pressure for bundle
random dynamical systems and give the power rule for this pressure.
In Section \ref{Sec4}, we state and prove the pre-image variational
principle.

\section{Pre-image measure-theoretic  entropy for bundle
RDS}\label{Sec2}

Let $(\Omega,\mathcal {F}, \mathbf{P})$ be a probability space
together with an invertible $\mathbf{P}$-preserving transformation
$\vartheta$, where $\mathcal{F}$ is  complete, countably generated
and separated points. Let $(X,d)$ be a compact metric space together
with the Borel $\sigma$-algebra $\mathcal{B}$. Let
$\mathcal{E}\subset \Omega \times X$ be measurable with respect to
the product $\sigma$-algebra $\mathcal{F}\times \mathcal{B}$ and the
fibers $\mathcal{E}_{\omega}=\{ x\in X: (\omega, x)\in
\mathcal{E}\}$, $\omega \in \Omega$ be compact. A continuous bundle
random dynamical system (RDS) $T$ over $(\Omega,\mathcal {F},
\mathbf{P}, \vartheta)$ is generated by map $T_{\omega}:
\mathcal{E_{\omega}}\rightarrow \mathcal{E_{\vartheta \omega}}$ with
iterates $T_{\omega}^{n}=T_{\vartheta^{n-1}\omega} \cdots
T_{\vartheta\omega}T_{\omega}$, $ n \geq 1$,  and
$T_{\omega}^{0}=id$, so that the map $(\omega, x) \rightarrow
T_{\omega}x$ is measurable and the map $x \rightarrow T_{\omega}x$
is continuous for $\mathbf{P}$-almost all (a.a) $\omega$. The map
$\Theta : \mathcal{E} \rightarrow \mathcal{E}$ defined by
$\Theta(\omega,x)=(\vartheta\omega, T_{\omega}x)$ is called the skew
product transformation.

Let $\mathcal {P}_{\mathbf{P}}(\mathcal{E}) = \{\mu \in \mathcal
{P}_{\mathbf{P}}(\Omega \times X) : \mu(\mathcal{E})=1\}$, where
$\mathcal {P}_{\mathbf{P}}(\Omega \times X)$ is the space of
probability measures on $\Omega \times X$ with the marginal
$\mathbf{P}$ on $\Omega$. Any $\mu \in \mathcal
{P}_{\mathbf{P}}(\mathcal{E})$ on $\mathcal{E}$ can be disintegrated
as $d \mu(\omega, x)=d \mu_{\omega}(x)\,d\mathbf{P}(\omega)$ (See
\cite{Dudley}), where $\mu_{\omega}$ are regular conditional
probabilities with respect to the $\sigma$-algebra $\mathcal
{F}_{\mathcal{E}}$ formed by all sets $(A\times X)\cap \mathcal{E}$
with $A \in \mathcal{F}$. Let $\mathcal
{M}_{\mathbf{P}}^1(\mathcal{E}, T)$ be the set of $\Theta$-invariant
measures $\mu \in\mathcal {P}_{\mathbf{P}}(\mathcal{E})$. $\mu$ is
$\Theta$-invariant if and only if the disintegrations $\mu_{\omega}$
of $\mu$ satisfy $T_{\omega}\mu_{\omega}=\mu_{\vartheta\omega}\,
\mathbf{P}$-a.s. \cite{Arnold}. Let $\mathcal {Q}=
\{\mathcal{Q}_{i}\}$ be a finite measurable partition of
$\mathcal{E}$, and $\mathcal {Q}(\omega)=
\{\mathcal{Q}_{i}(\omega)\}$, where $\mathcal{Q}_{i}(\omega)=\{x\in
\mathcal{E}_{\omega}: (\omega,x)\in \mathcal{Q}_{i}\}$ is a
partition of $\mathcal{E}_{\omega}$.
 For each $\omega \in \Omega$, let
$\mathcal{B}_{\omega}=\{B\cap \mathcal{E}_{\omega}: B\in
\mathcal{B}\}$ and $\mathcal{B}^-_{\omega}=\bigcap_{n\geq
0}(T^n_{\omega})^{-1}\mathcal{B}_{\vartheta^n\omega}$. Similarly,
let $(\mathcal{F}\times \mathcal{B})_{\mathcal{E}}=\{C\cap
\mathcal{E}:C \in \mathcal{F}\times \mathcal{B}\}$ and
$(\mathcal{F}\times \mathcal{B})^-_{\mathcal{E}}=\bigcap_{n\geq
0}\Theta^{-n}(\mathcal{F}\times \mathcal{B})_{\mathcal{E}}$.

For $\mu \in \mathcal {P}_{\mathbf{P}}(\mathcal{E})$, the
conditional entropy of  $\mathcal{Q}$ given by the $\sigma$-algebra
of $\mathcal {F}_{\mathcal{E}}\vee (\mathcal{F}\times
\mathcal{B})^-_{\mathcal{E}} \subset (\mathcal{F}\times
\mathcal{B})_{\mathcal{E}}$ is defined as usual (See Kifer
\cite{Kifer}) by
\begin{equation*}
\begin{split}
 &H_{\mu}(\mathcal{Q}\mid \mathcal {F}_{\mathcal{E}}\vee (\mathcal{F}\times
\mathcal{B})^-_{\mathcal{E}} )\\
 = &-\int \sum_i \mu(\mathcal{Q}_i \mid
\mathcal{F}_{\mathcal{E}}\vee (\mathcal{F}\times
\mathcal{B})^-_{\mathcal{E}} ) \log \mu(\mathcal{Q}_i \mid
\mathcal{F}_{\mathcal{E}}\vee (\mathcal{F}\times
\mathcal{B})^-_{\mathcal{E}} )\,d\mu .
\end{split}
\end{equation*}
The pre-image measure-theoretic entropy
$h_{\text{pre},\,\mu}^{(r)}(T)$ of bundle RDS $T$ with respect to
$\mu$ is defined by the formula
\begin{equation*}
\label{entropy}
 h_{\text{pre},\,\mu}^{(r)}(T)=\sup_{\mathcal{Q}}h_{\text{pre},\,\mu}^{(r)}(T,
\mathcal{Q}),
\end{equation*}
where
\begin{equation*}
h_{\text{pre},\,\mu}^{(r)}(T, \mathcal{Q})=\lim_{n\rightarrow
\infty}\frac{1}{n}H_{\mu}\bigl(
\bigvee_{i=0}^{n-1}\Theta^{-i}\mathcal{Q}|\mathcal{F}_{\mathcal{E}}\vee(\mathcal{F}\times
\mathcal{B})^-_{\mathcal{E}} \bigr)
\end{equation*}
and the supremum is taken over all finite or countable  measurable
partitions $\mathcal{Q}=\{\mathcal{Q}_i\}$ of $\mathcal{E}$ with
finite conditional entropy $H_{\mu}(\mathcal{Q}\mid
\mathcal{F}_{\mathcal{E}})< \infty$. The existence of the limit
follows from the formula
$\Theta^{-1}(\mathcal{F}_{\mathcal{E}}\vee(\mathcal{F}\times
\mathcal{B})^-_{\mathcal{E}}) \subset
\mathcal{F}_{\mathcal{E}}\vee(\mathcal{F}\times
\mathcal{B})^-_{\mathcal{E}}$
 and the standard subadditive argument (cf. Kifer \cite[Theorem
 II.1.1]{Kifer}).  The resulting entropy remains the same by taking
the supremum  only over partitions $\mathcal{Q}$ of $\mathcal{E}$
into sets $Q_i$ of the form $Q_i=(\Omega\times P_i)\cap\mathcal{E}$,
where $\mathcal{P}=\{P_i\}$ is a partition of $X$ into measurable
sets, so that $Q_i(\omega)=P_i\cap\mathcal{E}_{\omega}$ (See
\cite{Bogen,Bogenthesis,Kifer} for detail).

Compared with Zhu \cite{Zhu}, we define the pre-image
measure-theoretic entropy on the measurable subset $\mathcal{E}$
instead of on the whole space $\Omega\times X$. Moreover, if
$\mathcal{E}=\Omega\times X$, then the above definition is just the
measure-theoretic pre-image entropy in \cite{Zhu}. In this sense,
the definition is a generalization of Zhu's.

In \cite{Zhu}, Zhu gave another fiberwise expression for his defined
 measure-theoretic pre-image entropy, which can be seen as a
generalization of Kifer's discussion on the standard
measure-theoretic entropy for random bundle transformations
\cite{Kifer2001}. In a similar way, we can give a fiberwise
expression for the above definition. For completeness of this paper,
we state this proposition and give the proof.

\begin{proposition}
Let $\mathcal{Q}$ be a finite measurable partition of $\mathcal{E}$.
Then
 \begin{equation}\label{eq1}
 h_{\rm{pre},\,\mu}^{(r)}(T,
\mathcal{Q})=\lim_{n\rightarrow\infty}\frac{1}{n}\int
H_{\mu_{\omega}}\bigl(
\bigvee_{i=0}^{n-1}(T^i_{\omega})^{-1}\mathcal{Q}(\vartheta^i\omega)|\mathcal{B}^-_{\omega}\bigr)
\,d\mathbf{P}(\omega).
\end{equation}
\end{proposition}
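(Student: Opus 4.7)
The goal is to reduce the global conditional entropy on $\mathcal{E}$ to a fiberwise one, so the natural strategy is to track how $\Theta$-iterates and the conditioning $\sigma$-algebra $\mathcal{F}_{\mathcal{E}}\vee(\mathcal{F}\times\mathcal{B})^-_{\mathcal{E}}$ restrict to individual fibers $\mathcal{E}_{\omega}$. Throughout I would rely on the disintegration $d\mu(\omega,x)=d\mu_{\omega}(x)\,d\mathbf{P}(\omega)$ and the remark in the excerpt that it suffices to take partitions of the form $\mathcal{Q}_i=(\Omega\times P_i)\cap\mathcal{E}$; this keeps the measurability of the fiber partitions $\mathcal{Q}(\omega)$ transparent.

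\textbf{Step 1 (fiber description of the refined partition).} First I would verify that for any measurable partition $\mathcal{Q}$ of $\mathcal{E}$ the fiber of $\Theta^{-i}\mathcal{Q}$ over $\omega$ is exactly $(T^i_{\omega})^{-1}\mathcal{Q}(\vartheta^i\omega)$, using $\Theta^i(\omega,x)=(\vartheta^i\omega,T^i_{\omega}x)$. Consequently the fiber of $\bigvee_{i=0}^{n-1}\Theta^{-i}\mathcal{Q}$ at $\omega$ is $\bigvee_{i=0}^{n-1}(T^i_{\omega})^{-1}\mathcal{Q}(\vartheta^i\omega)$. Similarly the fiber of $\Theta^{-n}(\mathcal{F}\times\mathcal{B})_{\mathcal{E}}$ at $\omega$ is $(T^n_{\omega})^{-1}\mathcal{B}_{\vartheta^n\omega}$, hence the fiber of $(\mathcal{F}\times\mathcal{B})^-_{\mathcal{E}}$ at $\omega$ is precisely $\mathcal{B}^-_{\omega}$.

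\textbf{Step 2 (identification of conditional probabilities).} The core technical lemma I would prove is that for every measurable $Q\subset\mathcal{E}$,
\begin{equation*}
\mathbf{E}_{\mu}\!\bigl(\mathbf{1}_{Q}\,\big|\,\mathcal{F}_{\mathcal{E}}\vee(\mathcal{F}\times\mathcal{B})^-_{\mathcal{E}}\bigr)(\omega,x)=\mathbf{E}_{\mu_{\omega}}\!\bigl(\mathbf{1}_{Q(\omega)}\,\big|\,\mathcal{B}^-_{\omega}\bigr)(x)
\end{equation*}
for $\mu$-a.e.\ $(\omega,x)$. The right-hand side is jointly measurable in $(\omega,x)$ by the regularity of the disintegration together with the fact, from Step~1, that $\mathcal{B}^-_{\omega}$ arises as a fiber of a globally measurable $\sigma$-algebra. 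To verify the identification I would check the two defining properties of a conditional expectation: measurability with respect to $\mathcal{F}_{\mathcal{E}}\vee(\mathcal{F}\times\mathcal{B})^-_{\mathcal{E}}$ (using that sets of the form $(A\times X)\cap B$ with $A\in\mathcal{F}$ and $B$ in a fiberwise $\mathcal{B}^-_{\omega}$-measurable set generate this $\sigma$-algebra) and the correct integrals against such generating rectangles, which reduces by Fubini and disintegration to a pointwise identity in $\omega$.

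\textbf{Step 3 (conversion to the entropy identity).} Applying the lemma of Step~2 to each atom of $\bigvee_{i=0}^{n-1}\Theta^{-i}\mathcal{Q}$ and plugging into the definition of $H_{\mu}(\,\cdot\,|\,\mathcal{F}_{\mathcal{E}}\vee(\mathcal{F}\times\mathcal{B})^-_{\mathcal{E}})$, the function $-\sum_i p_i\log p_i$ inside the integral becomes $-\sum_i\mu_{\omega}(\cdot|\mathcal{B}^-_{\omega})\log\mu_{\omega}(\cdot|\mathcal{B}^-_{\omega})$, which is a function of $(\omega,x)$ alone. Integrating first against $\mu_{\omega}$ and then against $\mathbf{P}$ (justified by Fubini) yields
\begin{equation*}
H_{\mu}\!\Bigl(\bigvee_{i=0}^{n-1}\Theta^{-i}\mathcal{Q}\,\Big|\,\mathcal{F}_{\mathcal{E}}\vee(\mathcal{F}\times\mathcal{B})^-_{\mathcal{E}}\Bigr)=\int H_{\mu_{\omega}}\!\Bigl(\bigvee_{i=0}^{n-1}(T^i_{\omega})^{-1}\mathcal{Q}(\vartheta^i\omega)\,\Big|\,\mathcal{B}^-_{\omega}\Bigr)d\mathbf{P}(\omega).
\end{equation*}
Dividing by $n$, letting $n\to\infty$, and recalling the definition of $h_{\text{pre},\,\mu}^{(r)}(T,\mathcal{Q})$ produces \eqref{eq1}.

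\textbf{Main obstacle.} The only genuinely delicate point is Step~2: joint measurability of the fiberwise conditional expectation $\mathbf{E}_{\mu_{\omega}}(\mathbf{1}_{Q(\omega)}|\mathcal{B}^-_{\omega})(x)$ in $(\omega,x)$, and the matching of this object with a conditional expectation for the joint $\sigma$-algebra $\mathcal{F}_{\mathcal{E}}\vee(\mathcal{F}\times\mathcal{B})^-_{\mathcal{E}}$. I would handle it exactly in the spirit of Kifer's treatment of the standard entropy: approximate $(\mathcal{F}\times\mathcal{B})^-_{\mathcal{E}}$ by the decreasing sequence $\Theta^{-n}(\mathcal{F}\times\mathcal{B})_{\mathcal{E}}$ (so $\mathcal{B}^-_{\omega}$ is approximated by $(T^n_{\omega})^{-1}\mathcal{B}_{\vartheta^n\omega}$), verify the identity at each finite level where joint measurability is straightforward, and then pass to the limit using the martingale convergence theorem. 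Once this identification is in hand, the rest of the proof is bookkeeping.
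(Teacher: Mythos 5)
Your proposal is correct and follows essentially the same route as the paper: the paper's proof likewise identifies $E(f\mid\mathcal{F}_{\mathcal{E}}\vee(\mathcal{F}\times\mathcal{B})^-_{\mathcal{E}})(\omega,x)$ with $E(f_{\omega}\mid\mathcal{B}^-_{\omega})(x)$ by integrating against generating sets of the form $\bigcup_{\omega\in F}\{\omega\}\times B_{\omega}$ with $B_{\omega}\in\mathcal{B}^-_{\omega}$, then passes to the conditional information function and uses $(\Theta^{-i}\mathcal{Q})(\omega)=(T^i_{\omega})^{-1}\mathcal{Q}(\vartheta^i\omega)$. Your extra remarks on joint measurability via approximation by $\Theta^{-n}(\mathcal{F}\times\mathcal{B})_{\mathcal{E}}$ and martingale convergence only elaborate a point the paper leaves implicit.
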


\begin{proof}
Note that for any $f\in \mathbf{L}^1(\Omega\times X,\mu)$ and
$\bigcup_{\omega\in F}\{\omega\}\times B_{\omega} \in
\mathcal{F}_{\mathcal{E}}\vee (\mathcal{F}\times
\mathcal{B})^-_{\mathcal{E}} $ where $F\in
\Pr_{\Omega}\mathcal{F}_{\mathcal{E}}$ and $B_{\omega} \in
\mathcal{B}^-_{\omega} $, $\Pr_{\Omega}\mathcal{F}_{\mathcal{E}}$ is
the projection of $\mathcal{F}_{\mathcal{E}}$ into $\Omega$, we have
\begin{align*}
&\int_F\int_{B_{\omega}}E(f|\mathcal{F}_{\mathcal{E}}\vee
(\mathcal{F}\times \mathcal{B})^-_{\mathcal{E}}
)(\omega,x)\,d\mu_{\omega}(x)\,d\mathbf{P}(\omega)\\
=&\int_{\bigcup_{\omega\in F}\{\omega\}\times B_{\omega}
}f(\omega,x)\,d\mu(\omega,x)\\
=&\int_F\int_{B_{\omega}}f_{\omega}(x)
\,d\mu_{\omega}(x)\,d\mathbf{P}(\omega)\\
=&\int_F\int_{B_{\omega}} E(f_{\omega}|\mathcal{B}^-_{\omega})
\,d\mu_{\omega}(x)\,d\mathbf{P}(\omega)
\end{align*}
where $f_{\omega}(x)=f(\omega,x)$. Therefore,
$$
E(f|\mathcal{F}_{\mathcal{E}}\vee (\mathcal{F}\times
\mathcal{B})^-_{\mathcal{E}}
)(\omega,x)=E(f_{\omega}|\mathcal{B}_{\omega}^-)(x) \quad \quad
\mu-a.e.
$$
Hence for any finite measurable partition $\mathcal{Q}$ of
$\mathcal{E}$, we have
$$
I_{\mu}(\mathcal{Q}|\mathcal{F}_{\mathcal{E}}\vee (\mathcal{F}\times
\mathcal{B})^-_{\mathcal{E}})(\omega,x)=
I_{\mu_{\omega}}(\mathcal{Q}({\omega})|\mathcal{B}_{\omega}^-)(x)
\quad \quad \mu-a.e.
$$
where $I_{\cdot}(\cdot|\cdot)$ denotes the standard conditional
information function. Thus
$$
H_{\mu}(\mathcal{Q}|\mathcal{F}_{\mathcal{E}}\vee (\mathcal{F}\times
\mathcal{B})^-_{\mathcal{E}})=\int
I_{\mu_{\omega}}(\mathcal{Q}({\omega})|
\mathcal{B}_{\omega}^-)\,\mathbf{P}(\omega) .
$$
Since $(\Theta^{-i}\mathcal{Q})(\omega)=
(T_{\omega}^i)^{-1}\mathcal{Q}(\vartheta^i\omega)$ for any $i\in
\mathbb{N}$, then
$$
H_{\mu}(\bigvee_{i=0}^{n-1}\Theta^{-i}\mathcal{Q}|\mathcal{F}_{\mathcal{E}}\vee
(\mathcal{F}\times \mathcal{B})^-_{\mathcal{E}})= \int
H_{\mu_{\omega}}(\bigvee_{i=0}^{n-1}
(T_{\omega}^i)^{-1}\mathcal{Q}(\vartheta^i\omega)|\mathcal{B}_{\omega}^-)
\,d\mathbf{P}(\omega).
$$
Dividing by $n$ and letting $n\rightarrow \infty$, we obtain the
equality \eqref{eq1}.
\end{proof}

Moreover, if we use Zhu's methods  and  restrict the whole space
$\Omega\times X$ to the measurable subset $\mathcal{E}$ in
\cite{Zhu}, then all results with respect to the pre-image
measure-theoretic entropy defined by Zhu also hold for the above
definition. Then, we can use those results directly without giving
any proof whenever we consider the pre-image measure-theoretic
entropy.

\section{Pre-image topological pressure for bundle
RDS}\label{Sec3}

Let $X_{\mathcal{E}}=\{x\in X: (\omega,x)\in \mathcal{E},
\omega\in\Omega\}$. For each $x\in X_{\mathcal{E}}$, by the
measurability of bundle RDS $T$, $\mathcal{E}(x)=\{(\omega, y):
\omega\in \Omega, y\in T^{-1}_{\omega}x\}$ is measurable with
respect to the product $\sigma$-algebra $\mathcal{F}\times
\mathcal{B} $. For each $k\in \mathbb{N}$, let
$\mathcal{E}(x,k)=\{(\omega, y): \omega\in \Omega, y\in
(T^k_{\omega})^{-1}x\}$. By the continuity of bundle  RDS $T$ and
\cite[Theorem III.30]{Castaing}, it is not hard to see that
$\mathcal{E}(x,k)$ is also measurable. Since for each $k\in
\mathbb{N}$, $(T^k_{\omega})^{-1}x$ is compact in
$\mathcal{E}_{\omega}$, then the mapping $\omega:\rightarrow
(T^k_{\omega})^{-1}x$ is measurable (See\cite[Chaper III]{Castaing})
with respect to the Borel $\sigma$-algebra induced by the Hausdorff
topology  on the space $\mathcal{K}(X)$, and  the distance function
$d(z,(T^k_{\omega})^{-1}x)$ is measurable in $\omega \in \Omega$ for
each $z\in X$.

For each $n\in\mathbb{N}$, a family of metrics $d_n^{\omega}$ on
$\mathcal{E}_{\omega}$ is defined as
$$d_n^{\omega}(y,z)=\max_{0 \leq i < n}(d(T_{\omega}^i y,
T_{\omega}^i z)), \quad  y,z \in \mathcal{E}_{\omega}.$$ It is not
hard to see that for each $k\in \mathbb{N}$ and $x\in
X_{\mathcal{E}}$, the set $\mathcal{E}^{(2)}(x,k)=\{(\omega,y,z):
y,z \in (T^k_{\omega})^{-1}x\}$ belongs to the product
$\sigma$-algebra $\mathcal{F}\times \mathcal{B}^2$ (See
\cite[Proporsition III.13]{Castaing}). Since for each $m\in
\mathbb{N}$, $\epsilon>0$ and a real number $a$ the set
$\{(\omega,y,z)\in \mathcal{E}^{(2)}(x,k): d(T^m_{\omega}y,
T^m_{\omega}z)\leq a\epsilon\}$ is measurable with respect to this
product $\sigma$-algebra, then $d_n^{\omega}(y,z)$ depends
measurably on $(\omega,y,z)\in \mathcal{E}^{(2)}(x,k)$.

For each $n\in\mathbb{N}$  and $\epsilon >0$, a set $F\subset
\mathcal{E}_{\omega}$ is said to be $(\omega, n,\epsilon
)$-separated if for any $y,z \in F$, $y\neq z$ implies
$d_n^{\omega}(y,z)>\epsilon$. Similarly, for a compact subset
$K\subset \mathcal{E}_{\omega}$, $F\subset K$ is said to be
$(\omega, n,\epsilon )$-separated for $K$ if for any $y,z\in F$,
$y\neq z$ implies $d_n^{\omega}(y,z)>\epsilon$.

Due to the compactness, there exists a smallest natural number
$s_n(\omega,\epsilon)$ such that $\text{card}(F)\leq
s_n(\omega,\epsilon)<\infty$ for every $(\omega, n,\epsilon
)$-separated $F$. Moreover, there always exists a maximal $(\omega,
n,\epsilon )$-separated set $F$ in the sense that for every $y\in
\mathcal{E}_{\omega}$ with $y\not\in F $ the set $F \cup \{y\}$ is
not $(\omega, n,\epsilon )$-separated anymore. In particular, this
is also true for any compact subset $K$ of $\mathcal{E}_{\omega}$.
Let $s_n(\omega,\epsilon,K)$ be the smallest natural number such
that $\text{card}(F)\leq s_n(\omega,\epsilon,K)<\infty$ for every
$(\omega, n,\epsilon )$-separated set $F$ of $K$.

For each function $f$ on $\mathcal{E}$, which is measurable in
$(\omega, x)$ and continuous in $x \in \mathcal{E}_{\omega}$, let
\begin{equation*}
\| f \| =\int \|  f(\omega) \|_{\infty} \, d\mathbf{P}, \quad
\text{where} \quad \|  f(\omega) \|_{\infty} = \sup_{x\in
\mathcal{E}_{\omega}}\mid f(\omega,x) \mid.
\end{equation*}
Let $\mathbf{L}_{\mathcal{E}}^1 (\Omega, \mathcal{C}(X))$ be  the
space of such functions $f$ with $\|f \| < \infty$. If we identify
$f$ and $g$ for $f, g \in
 \mathbf{L}_{\mathcal{E}}^1 (\Omega, \mathcal{C}(X))$ with $\| f-g\| =0$, then
$\mathbf{L}_{\mathcal{E}}^1 (\Omega, \mathcal{C}(X))$ is a Banach
space with the norm $\| \cdot \|$.

For any $\delta>0$, let
$$
\kappa_{\delta}^{(f)}(\omega)=\sup\{|f(\omega,x)-f(\omega,y)|:x,y\in
\mathcal{E}_{\omega}, d(x,y)\leq \delta\}.
$$

For $f \in \mathbf{L}_{\mathcal{E}}^1 (\Omega, \mathcal{C}(X))$,
$k,n\in \mathbb{N}$ with $k\geq n$, $\epsilon>0$, $x\in
X_{\mathcal{E}}$ with $(T_{\omega}^k)^{-1}x\neq \emptyset$,  and an
$(\omega, n, \epsilon)$-separated set $E$ of $(T^k_{\omega})^{-1}x$
such that $E\subset (T^k_{\omega})^{-1}x\subset
\mathcal{E}_{\omega}$, set
\begin{equation*}
S_nf(\omega,y)=\sum_{i=0}^{n-1}f(\vartheta^i\omega, T^i_{\omega}y)=
\sum_{i=0}^{n-1}f\circ\Theta^i(\omega,y),
\end{equation*}
and denote
\begin{equation*}
P_{\text{pre},\,n,\,
\omega}(T,f,\epsilon,(T^k_{\omega})^{-1}x)=\sup_{E}\sum_{y\in E}\exp
S_nf (\omega,y) ,
\end{equation*}
where the supremum is taken over all $(\omega,n,
\epsilon)$-separated sets of $(T^k_{\omega})^{-1}x$ in
$\mathcal{E}_{\omega}$.  Based on the foregoing analysis, clearly,
any $(\omega,n,\epsilon)$-separated set can be completed to a
maximal one. Then, the supremum can be taken only over all maximal
$(\omega, n ,\epsilon)$-separated sets.
 For $(T_{\omega}^k)^{-1}x=
\emptyset$, let $P_{\text{pre},\,n,\,
\omega}(T,f,\epsilon,(T^k_{\omega})^{-1}x)=0$ for all $n$ and
$\epsilon$, then the function $P_{\text{pre},\,n,\,
\omega}(T,f,\epsilon,(T^k_{\omega})^{-1}x)$ is well-defined. In this
paper, we always assume that for each $k\in\mathbb{N}$ and $x\in
X_{\mathcal{E}}$, $(T_{\omega}^k)^{-1}x\neq \emptyset$.
Alternatively, we can  also assume that for each $\omega$ the
mapping $T_{\omega}$ is  surjective.

The following auxiliary result, which relies on Kifer's work
\cite{Kifer2001} and restricts his result to the family of compact
subsets  $(T_{\omega}^k)^{-1}x$ of $\mathcal{E}_{\omega}$ for
nonrandom positive number $\epsilon$, provides the basic properties
of measurability needed in what follows. We make a little adjustment
to Kifer's proof for the purpose of defining the pre-image
topological pressure.

\begin{lemma}\label{lem}
For each $x\in X_{\mathcal{E}}$,  $k, n\in \mathbb{N}$ with $k\geq
n$, and a  nonrandom small positive number $\epsilon$, the function
$P_{{\rm pre},\,n,\, \omega}(T,f,\epsilon,(T^k_{\omega})^{-1}x)$ is
measurable in $\omega$, and for any $\delta >0$, there exists a
family of maximal $(\omega,n,\epsilon)$-separated sets
$G_{\omega}\subset(T^k_{\omega})^{-1}x\subset \mathcal{E}_{\omega}$
satisfying
\begin{equation}\label{inseq6}
\sum_{y\in G_{\omega}}\exp S_nf (\omega,y)\geq (1-\delta)P_{{\rm
pre},\,n,\, \omega}(T,f,\epsilon,(T^k_{\omega})^{-1}x)
\end{equation}
and depending measurably on $\omega$ in the sense that
$G=\{(\omega,x):x\in G_{\omega}\}\in \mathcal{F}\times \mathcal{B}$,
which also means that the mapping $\omega\rightarrow G_{\omega}$ is
measurable with respect to the Borel $\sigma$-algebra induced by the
Hausdorff topology on the $\mathcal{K}(X)$ of compact subsets of
$X$. In particular, the supremum in the definition of $P_{{\rm
pre},\,n,\, \omega}(T,f,\epsilon,(T^k_{\omega})^{-1}x)$ can be taken
only over families of $(\omega,n, \epsilon)$-separated sets, which
are measurable in $\omega$.
\end{lemma}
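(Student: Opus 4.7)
I will follow Kifer's argument from \cite{Kifer2001} for the analogous statement about the standard pressure, making two adjustments: the relevant compact multifunction is $\omega\mapsto(T_\omega^k)^{-1}x$ in place of the full fiber $\mathcal{E}_\omega$, and one must use the measurability of this multifunction together with joint measurability of $d_n^\omega$ on $\mathcal{E}^{(2)}(x,k)$, both already established in the paragraphs preceding the lemma. The starting point is Castaing's representation theorem \cite{Castaing}: there exist measurable selections $\zeta_j:\Omega\to X$, $j\in\mathbb{N}$, such that $\zeta_j(\omega)\in(T_\omega^k)^{-1}x$ for every $\omega$ and $\overline{\{\zeta_j(\omega):j\geq 1\}}=(T_\omega^k)^{-1}x$.

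For measurability of $P_{\text{pre},\,n,\,\omega}$, I would index finite tuples $\sigma=(j_1,\ldots,j_m)\in\bigcup_{m\geq 1}\mathbb{N}^m$ and set
\begin{equation*}
\Omega_\sigma=\{\omega:d_n^\omega(\zeta_{j_p}(\omega),\zeta_{j_q}(\omega))>\epsilon \text{ for all } p\neq q\},
\end{equation*}
which belongs to $\mathcal{F}$ by joint measurability of $d_n^\omega$ and the $\zeta_j$. On $\Omega_\sigma$ the sum $\Sigma_\sigma(\omega):=\sum_p\exp S_nf(\omega,\zeta_{j_p}(\omega))$ is measurable, and $\{\zeta_{j_p}(\omega)\}$ is an $(\omega,n,\epsilon)$-separated subset of $(T_\omega^k)^{-1}x$. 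Conversely, any $(\omega,n,\epsilon)$-separated $E\subset(T_\omega^k)^{-1}x$ has a strictly positive separation margin $\min_{y\neq z\in E}d_n^\omega(y,z)-\epsilon$, so by density of $\{\zeta_j(\omega)\}$ and uniform continuity of $f(\omega,\cdot)$ on the compact set, $E$ can be approximated by a tuple from $\{\zeta_j(\omega)\}$ that remains $(\omega,n,\epsilon)$-separated and whose exponential sum differs from that of $E$ by arbitrarily little. Hence
\begin{equation*}
P_{\text{pre},\,n,\,\omega}(T,f,\epsilon,(T_\omega^k)^{-1}x)=\sup_\sigma \mathbf{1}_{\Omega_\sigma}(\omega)\,\Sigma_\sigma(\omega),
\end{equation*}
a countable supremum of measurable functions, so measurable.

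For the choice of $G_\omega$ satisfying \eqref{inseq6}, enumerate the tuples as $\sigma_1,\sigma_2,\ldots$ and let $\iota(\omega)$ be the least index $i$ for which $\omega\in\Omega_{\sigma_i}$ and $\Sigma_{\sigma_i}(\omega)\geq(1-\delta)P_{\text{pre},\,n,\,\omega}(T,f,\epsilon,(T_\omega^k)^{-1}x)$. Both conditions are measurable, so $\iota$ is measurable and integer-valued, and the set $G_\omega^\circ$ corresponding to $\sigma_{\iota(\omega)}$ is measurably $(\omega,n,\epsilon)$-separated with $(1-\delta)$-optimal exponential sum. To upgrade $G_\omega^\circ$ to a maximal $(\omega,n,\epsilon)$-separated set $G_\omega$ while preserving measurability and the $(1-\delta)$ bound, adjoin $\zeta_j(\omega)$ for $j=1,2,\ldots$ whenever it remains $\epsilon$-separated from the points already chosen; compactness bounds the total number of adjunctions, and each adjunction decision is a measurable event.

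The main obstacle is verifying that this greedy construction, being run only over the countable family $\{\zeta_j(\omega)\}$, produces a set maximal against all of $(T_\omega^k)^{-1}x$. If some $z\in(T_\omega^k)^{-1}x$ were $\epsilon$-separated from every chosen point, the strictly positive separation margin together with density would supply a $\zeta_j(\omega)$ close enough to $z$ to be $\epsilon$-separated from the chosen set, contradicting termination of the greedy procedure. The resulting $\omega\mapsto G_\omega$ has graph in $\mathcal{F}\times\mathcal{B}$ and is therefore measurable in the Hausdorff sense, as required.
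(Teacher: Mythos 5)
Your argument is correct, but it follows a genuinely different route from the paper's. The paper works in the product spaces $\Omega\times X^q$: it forms the sets $D_q$ of $q$-tuples from the fibre $(T^k_\omega)^{-1}x$, relaxes the separation condition to $d_n^\omega\geq\epsilon+1/l$ so that the sections $E_q^{n,l}(\omega)$ are compact, invokes the projection theorem to get measurability of the maximal cardinality $s_n(\omega,\epsilon)$, uses Castaing--Valadier's result on suprema over measurable compact-valued multifunctions to get measurability of the partial suprema $g_{q,l}$ (hence of $P_{{\rm pre},\,n,\,\omega}$), and finally produces the near-optimal maximal separated family by applying a measurable selection theorem to the multifunctions $\omega\mapsto G^{n,l}_{q,\delta}(\omega)$ over a measurable partition $\{\Omega_{q,l}\}$ of $\Omega$. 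You instead invoke the Castaing representation of the measurable compact-valued multifunction $\omega\mapsto(T^k_\omega)^{-1}x$ by a countable dense family of measurable selections, reduce the supremum to a countable supremum over tuples of selections via a density/margin argument (using that $d$-density implies $d_n^\omega$-density by uniform continuity of the $T^i_\omega$), select the first near-optimal tuple, and maximalize greedily over the countable dense family; your margin argument correctly shows the greedy output is maximal against all of $(T^k_\omega)^{-1}x$, not merely against the selections. Your route avoids the projection theorem and the $\epsilon+1/l$ compactness bookkeeping, at the price of an extra maximalization step; the paper's route yields maximal-cardinality separated sets directly. Two points you should make explicit to be complete: the measurability of the greedy adjunction at stage $j$ requires an induction that partitions $\Omega$ into countably many measurable pieces according to the selected tuple $\sigma_{\iota(\omega)}$ and the adjunction history up to stage $j-1$ (routine, but it is where the measurability of the final graph actually lives), and the closing assertion of the lemma --- that the supremum defining $P_{{\rm pre},\,n,\,\omega}$ may be taken over measurable families only --- follows by letting $\delta\to 0$, which you do not state.
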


\begin{proof}
Fix $x\in X_{\mathcal{E}}$. For $q, n \in \mathbb{N}_+$, let
\begin{align*}
D_q&=\{(\omega,x_1,\ldots,x_q):\omega\in \omega, x_i\in
(T^k_{\omega})^{-1}x, \forall i\},\\
E_q^n&=\{(\omega,x_1,\ldots,x_q)\in D_q:
d_n^{\omega}(x_i,x_j)>\epsilon, \forall i\neq j \},\\
E_q^{n,l}&=\{(\omega,x_1,\ldots,x_q)\in D_q:
d_n^{\omega}(x_i,x_j)\geq \epsilon + 1/l, \forall i\neq j \},\\
E_q^n(\omega)&=\{(x_1,\ldots,x_q): (\omega,x_1,\ldots,x_q)\in
E_q^n\}\\
E_q^{n,l}(\omega)&=\{(x_1,\ldots,x_q): (\omega,x_1,\ldots,x_q)\in
E_q^{n,l}\}.
\end{align*}
Observe that $D_q \in \mathcal{F}\times \mathcal{B}^q$, where
$\mathcal{B}^q$ is the product $\sigma$-algebra on the product of
$q$ copies of $X$. This follows from \cite[Theorem III.30]{Castaing}
since
$$
d_q((x_1,\ldots,x_q),(y_1,\ldots,y_q))=\sum_{i=1}^qd(x_i,y_i)
$$
is the distance function on $X^q$; and if
$\mathcal{E}_{\omega}^{(q)}(x,k)$ denotes the product of $q$ copies
of $\mathcal{E}_{\omega}(x,k)=\{y: (\omega,y)\in
\mathcal{E}(x,k)\}$, then
$$
d_q((x_1,\ldots,x_q),\mathcal{E}_{\omega}^{(q)}(x,k)
)=\sum_{i=1}^qd(x_i,\mathcal{E}_{\omega}(x,k))
$$
is measurable in $\omega$ for each $(x_1,\ldots,x_q) \in X^q$. Next
we define $q(q-1)/2$ measurable functions $\psi_{ij}, 1\leq i<j\leq
q$, on $D_q$ by
$\psi_{ij}(\omega,x_1,\ldots,x_q)=d_n^{\omega}(x_i,x_j)$. Then
$$
E_q^{n,l}=\bigcap_{1\leq i<j\leq
q}\psi_{ij}^{-1}[\epsilon+1/l,\infty) \in \mathcal{F}\times
\mathcal{B}^q.
$$
By the continuity of the RDS $T$, each $E_q^{n,l}(\omega)$ is a
closed subset of $\mathcal{E}_{\omega}^{(q)}(x,k)$, thus it is
compact. Clearly, $E_q^{n,l} \uparrow  E_q^n$ and
$E_q^{n,l}(\omega)\uparrow E_q^n(\omega)$ as $l \rightarrow \infty$.
In particular, $E_q^n\in \mathcal{F}\times \mathcal{B}^q$.

Let $s_n(\omega,\epsilon)$ be the largest cardinality of all
$(\omega,n, \epsilon)$-separated set in $\mathcal{E}_{\omega}(x,k)$
and $t_{n,l}(\omega,\epsilon)=\max\{q:E_q^{n,l}(\omega)\neq
\emptyset \}$. Then by \cite[Theorem III.23]{Castaing}, we have
$$
\{\omega:t_{n,l}(\omega,\epsilon)\geq q\}=\{\omega
:E_q^{n,l}(\omega)\neq \emptyset\}={\rm Pr}_{\Omega}E_q^{n,l}\in
\mathcal{F},
$$
where ${\rm Pr}_{\Omega}$ is the projection of $\Omega\times X^q$ to
$\Omega$. Thus $t_{n,l}(\omega,\epsilon)$ is measurable in $\omega$.
Now we get
\begin{align*}
\{\omega:s_n(\omega,\epsilon)\geq q\}=\{\omega:E_q^n(\omega)\neq
\emptyset\}=\bigcup_{m=1}^{\infty}\bigcap_{l=m}^{\infty}\{\omega
:E_q^{n,l}(\omega)\neq \emptyset\}\in \mathcal{F}.
\end{align*}
Then $s_n(\omega,\epsilon)$ is measurable in $\omega$ as well. Since
$s_n(\omega,\epsilon)\geq t_{n,l}(\omega,\epsilon)$ for all $l\geq
1$, then $t_{n,l}(\omega,\epsilon)\uparrow s_n(\omega,\epsilon)$ as
$l\rightarrow \infty$, thus $t_{n,l}(\omega,\epsilon)=
s_n(\omega,\epsilon)$ for all $l$ large enough (depending on $n$ and
$\omega$). By \cite[Lemma III.39]{Castaing}, each function
$$
g_{q,l}=\sup\bigl\{\sum_{i=1}^q\exp(S_nf(\omega,x_i)):(x_1,\ldots,x_q)\in
E_q^{n,l}(\omega) \bigr\}
$$
is measurable. Thus the functions $g_q=\sup_{l\geq1}g_{q,l}$ and
$P_{\text{pre},\,n,\,
\omega}(T,f,\epsilon,(T^k_{\omega})^{-1}x)=\max_{1\leq q \leq
s_n(\omega,\epsilon)}g_q(\omega)$ are both measurable.

For each constant $\delta>0$,  the set
$$
F_{q,\delta}^n=\bigl\{(\omega,x_1,\ldots, x_q)\in D_q:
\sum_{i=1}^q\exp(S_nf(\omega,x_i))\geq (1-\delta)g_q(\omega) \bigr\}
$$
belongs to $\mathcal{F}\times\mathcal{B}^q $. Hence
$$
G_{q,\delta}^n=F_{q,\delta}^n\cap E_q^n\in
\mathcal{F}\times\mathcal{B}^q \quad \text{and }\quad
G_{q,\delta}^{n,l}= F_{q,\delta}^n\cap E_q^{n,l}\in
\mathcal{F}\times\mathcal{B}^q.
$$
Let
\begin{align*}
G_{q,\delta}^n(\omega)&=\{(x_1,\ldots,x_q):
(\omega,x_1,\ldots,x_q)\in G_{q,\delta}^n\},\\
G_{q,\delta}^{n,l}(\omega)&=\{(x_1,\ldots,x_q):
(\omega,x_1,\ldots,x_q)\in G_{q,\delta}^{n,l}\}.
\end{align*}
Observe that $G_{q,\delta}^{n,l}(\omega)$ are compact sets and
$G_{q,\delta}^{n,l}(\omega)\uparrow G_{q,\delta}^n(\omega)$ as
$l\rightarrow \infty$. The sets
$$
\widetilde{\Omega}_{q,l}=\{\omega:
t_{n,l}(\omega,\epsilon)=s_n(\omega,\epsilon)=q\}\cap \{\omega:
G_{q,\delta}^{n,l}(\omega)\neq \emptyset\}
$$
are, clearly, measurable, and the sets
$\Omega_{q,l}=\widetilde{\Omega}_{q,l}\backslash
\widetilde{\Omega}_{q,l-1}, l=1,2,\ldots$, with
$\widetilde{\Omega}_{q,0}=\emptyset$ are measurable, disjoint and
$\bigcup_{q,l\geq 1}\Omega_{q,l}=\Omega$. Thus (See \cite[Theorem
III.30]{Castaing}),  the multifunction $\Psi_{q,l,\delta}$ defined
by $\Psi_{q,l,\delta}(\omega)=G_{q,\delta}^{n,l}(\omega)$ for
$\omega \in \Omega_{q,l}$ is measurable, and it admits a measurable
selection $\sigma_{q,l,\delta}$ which is measurable map
$\sigma_{q,l,\delta}:\Omega_{q,l}\rightarrow X^q$ such that
$\sigma_{q,l,\delta}(\omega)\in G_{q,\delta}^{n,l}(\omega)$ for all
$\omega \in \Omega_{q,l}$. Let $\zeta_q$ be the multifunction from
$X^q$ to $q$-point subsets of $X$ defined by
$\zeta_q(x_1,\ldots,x_q)=\{x_1,\ldots,x_q\}\subset X$. Then
$\zeta_q\circ\sigma_{q,l,\delta}$ is a multifunction assigning to
each $\omega \in \Omega_{q,l}$ a maximal
$(\omega,n,\epsilon)$-separated set $G_{\omega}\subset
(T^k_{\omega})^{-1}x $ in $ \mathcal{E}_{\omega}$ for which
\eqref{inseq6} holds true.

For any open set $U \subset X$, let $V_U^q(i)=\{(x_1,\ldots,x_q)\in
X^q: x_i\in U\}$, which is an open set of $X^q$. Then, clearly,
$$
\{\omega\in \Omega_{q,l}:
\zeta_q\circ\sigma_{q,l,\delta}(\omega)\cap U\neq \emptyset\} =
\bigcup_{i=1}^q \sigma_{q,l,\delta}^{-1}V_U^q(i)\in \mathcal{F}.
$$
Now we define the random variable $m_q$ by $m_q(\omega)=l$ for all
$\omega\in \Omega_{q,l}$. Let $\Phi_{\delta}(\omega)=
\zeta_{s_n(\omega,\epsilon)} \circ
\sigma_{s_n(\omega,\epsilon),m_q(\omega),\delta}(\omega)$; then
$$
\{\omega:\Phi_{\delta}(\omega)\cap U\neq
\emptyset\}=\bigcup_{q,l=1}^{\infty}\{\omega\in \Omega_{q,l}:
\zeta_q\circ\sigma_{q,l,\delta}(\omega)\cap U\neq \emptyset\}\in
\mathcal{F}.
$$
Hence $\Phi_{\delta}$ is a measurable multifunction which assigns to
each $\omega\in \Omega$ a maximal $(\omega,n,\epsilon)$-separated
set $G_{\omega}$ for which \eqref{inseq6} holds  and Lemma \ref{lem}
follows since $\delta>0$ is arbitrary.
\end{proof}

In view of this assertion, for each $f\in
\mathbf{L}^1_{\mathcal{E}}(\Omega, \mathcal{C}(X))$ and any positive
number $\epsilon$ we can introduce the function
\begin{equation}\label{predefn}
P_{\text{pre}}(T,f,\epsilon)=\limsup_{n\rightarrow
\infty}\frac{1}{n} \sup _{k\geq n}\sup_{x\in X_\mathcal{E}}\int \log
P_{\text{pre},\,n,\,
\omega}(T,f,\epsilon,(T^k_{\omega})^{-1}x)\,d\mathbf{P}(\omega).
\end{equation}
Note that though we set $x\in X_\mathcal{E}$, only those points such
that $x\in \mathcal{E}_{\vartheta^k\omega}$ act in the function
$P_{\text{pre}}(T,f,\epsilon)$.

\begin{definition}\label{defn1}
The pre-image topological pressure of a function $f\in
\mathbf{L}^1_{\mathcal{E}}(\Omega, \mathcal{C}(X))$ for bundle RDS
$T$ is the map
\begin{equation*}
P_{\text{pre}}(T,\cdot): \mathbf{L}^1_{\mathcal{E}}(\Omega,
\mathcal{C}(X)) \rightarrow \mathbb{R}\cup \{\infty\},
\,\,\text{where}\,\,
 P_{\text{pre}}(T,f)=\lim_{\epsilon\rightarrow
0}P_{\text{pre}}(T,f,\epsilon).
\end{equation*}
The limit exists since $P_{\text{pre}}(T,f,\epsilon)$ is monotone in
$\epsilon$ and, in fact, $\lim_{\epsilon \rightarrow 0 }$ above
equals  $\sup_{\epsilon > 0}$.
\end{definition}

\begin{definition}\label{defn2}
The pre-image topological entropy for bundle RDS $T$ is defined as
$$
h_{\text{pre}}^{(r)}(T)=P_{\rm pre}(T,0)=\lim_{\epsilon \rightarrow
0 }\limsup_{n\rightarrow \infty}\frac{1}{n} \sup _{k\geq
n}\sup_{x\in X_\mathcal{E}}\int \log
s_n(\omega,\epsilon,(T^k_{\omega})^{-1}x)\,d\mathbf{P}(\omega),
$$
where $s_n(\omega,\epsilon,(T^k_{\omega})^{-1}x)$ is the largest
cardinality of an $(\omega, n,\epsilon )$-separated set of
$(T^k_{\omega})^{-1}x$.
\end{definition}

\begin{remark}
Definition \ref{defn2} is different from the random pre-image
topological entropy defined by Zhu \cite{Zhu}, which in our
terminology can be expressed as
$$
h_{\text{pre}}^{(r)}(T)=\lim_{\epsilon \rightarrow 0
}\limsup_{n\rightarrow \infty}\frac{1}{n} \sup _{k\geq n}\int
\log\sup_{x\in X_\mathcal{E}}
s_n(\omega,\epsilon,(T^k_{\omega})^{-1}x)\,d\mathbf{P}(\omega).
$$
The measurability of the function $\sup_{x\in X_\mathcal{E}}
s_n(\omega,\epsilon,(T^k_{\omega})^{-1}x)$ can not
 been guaranteed in most cases.
In Definition \ref{defn2}, based on Lemma \ref{lem},
$s_n(\omega,\epsilon,(T^k_{\omega})^{-1}x)$ is always measurable. On
the other hand, through a rigorous investigation, it is not hard to
see that if we replace Zhu' definition by Definition \ref{defn2} and
make a little change to Zhu's argument, then the variational
principle for the pre-image entropy for bundle RDS $T$ still holds,
namely, $h_{\text{pre}}^{(r)}(T)=\sup
\{h_{\text{pre},\,\mu}^{(r)}(T): \mu \in
\mathcal{M}_{\mathbf{P}}^1(\mathcal{E},T) \} $. In fact, we just
need to make some adjustment to the order of the supremum and the
logarithm and restrict the whole space $\Omega \times X$ to the
measurable subset $\mathcal{E}$ in his proof.
\end{remark}

\begin{remark}\label{rem1}
If the measure space $\Omega$ consists of a single point, i.e.,
$\Omega=\{\omega\}$, then bundle RDS $T$ reduces to a deterministic
dynamical system $(\mathcal{E}_{\omega}, d, T)$, where $T:
\mathcal{E}_{\omega} \rightarrow \mathcal{E}_{\omega}$ is
continuous. Furthermore, if $\mathcal{E}_{\omega}=X$, then
Definition \ref{defn1} is just the pre-image topological pressure
defined by Zeng \cite{Fanping} for deterministic dynamical systems
except for the order of the supremum and the logarithm, and the
difference between the two kinds of order  does not affect the
variational principle and the method of the argument.
\end{remark}

For a given $m\in \mathbb{N}_{+}$, if we replace $\vartheta$ by
$\vartheta^m$ and consider bundle RDS $T^m$ defined by
$(T^m)^n_{\omega}=T^m_{\vartheta^{(n-1)m}\omega}\cdots
T^m_{\vartheta^m \omega} T^m_{\omega}$, i.e.,
$(T^m)^n_{\omega}=T_{\omega}^{mn}$, then the pre-image topological
pressure has the following power rule.

\begin{proposition}\label{prop}
For any $m>0$, $P_{{\rm pre}}(T^m ,S_m f)=m P_{{\rm pre}}(T ,f)$.
\end{proposition}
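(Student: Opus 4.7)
My plan is to establish the two inequalities $P_{\text{pre}}(T^m,S_m f)\le m P_{\text{pre}}(T,f)$ and its reverse separately. Both rest on the telescoping identity
$$\sum_{i=0}^{n-1} S_m f(\vartheta^{mi}\omega, T_\omega^{mi}y) \;=\; \sum_{\ell=0}^{mn-1} f(\vartheta^\ell\omega, T_\omega^\ell y) \;=\; S_{mn}f(\omega,y),$$
combined with $((T^m)^k_\omega)^{-1}x = (T_\omega^{mk})^{-1}x$.

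\textbf{Upper bound.} Since $d_{n,T^m}^\omega(y,z)=\max_{0\le i<n} d(T_\omega^{mi}y,T_\omega^{mi}z)\le d_{mn,T}^\omega(y,z)$, every $(\omega,n,\epsilon)$-separated set for $T^m$ contained in $(T_\omega^{mk})^{-1}x$ is also $(\omega,mn,\epsilon)$-separated for $T$ inside the same pre-image. Together with the telescoping identity this gives the pointwise bound
$$P_{\text{pre},n,\omega}(T^m,S_m f,\epsilon,(T_\omega^{mk})^{-1}x)\;\le\; P_{\text{pre},mn,\omega}(T,f,\epsilon,(T_\omega^{mk})^{-1}x).$$
Taking logarithms, integrating in $\omega$, noting that $\sup_{k\ge n}$ on the left runs only over $k'=mk\in\{mn,m(n+1),\dots\}\subset\{k'\ge mn\}$, and writing $\tfrac{1}{n}=\tfrac{m}{mn}$, the $\limsup_{n\to\infty}$ followed by $\lim_{\epsilon\to 0}$ produces $P_{\text{pre}}(T^m,S_m f)\le m P_{\text{pre}}(T,f)$.

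\textbf{Lower bound; main obstacle.} The reverse direction is the hard part: I must convert $T$-separated sets back into $T^m$-separated sets through an equicontinuity argument. Fix $\epsilon>0$. Compactness of each $\mathcal{E}_\omega$ together with continuity of $T_\omega^j$ for $0\le j<m$ produces a measurable positive function $\delta(\cdot,\epsilon)$ with $d(a,b)<\delta(\omega,\epsilon)\Rightarrow \max_{0\le j<m}d(T_\omega^j a,T_\omega^j b)<\epsilon$. Writing the separating index as $\ell=mi+j$ then shows that every $(\omega,mn,\epsilon)$-separated set for $T$ is $(\omega,n,\tilde\delta_n(\omega))$-separated for $T^m$, where $\tilde\delta_n(\omega)=\min_{0\le i<n}\delta(\vartheta^{mi}\omega,\epsilon)$; consequently
$$P_{\text{pre},mn,\omega}(T,f,\epsilon,(T_\omega^{mk})^{-1}x)\;\le\; P_{\text{pre},n,\omega}(T^m,S_m f,\tilde\delta_n(\omega),(T_\omega^{mk})^{-1}x).$$
The principal obstacle is that $\tilde\delta_n(\omega)$ is both random and $n$-dependent, whereas $P_{\text{pre}}(T^m,S_m f,\cdot)$ is defined with a deterministic threshold. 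I propose to handle this by localization: for $\eta>0$ select $\delta_0>0$ with $\mathbf{P}(\delta(\cdot,\epsilon)<\delta_0)$ small; on the good set $\{\tilde\delta_n\ge\delta_0\}$ the monotonicity of $P_{\text{pre},n,\omega}$ in its separation parameter allows me to replace $\tilde\delta_n(\omega)$ by $\delta_0$, while on the complement a crude linear bound $\log P_{\text{pre},mn,\omega}(T,f,\epsilon,\cdot)\le mn\|f(\omega)\|_\infty+\log s_{mn}(\omega,\epsilon)$ combined with the smallness of the bad set contributes a remainder that vanishes after dividing by $mn$ and letting $\eta\to 0$. Passing to $\limsup_{n\to\infty}$, then $\delta_0,\eta\to 0$, and finally $\epsilon\to 0$ yields $mP_{\text{pre}}(T,f)\le P_{\text{pre}}(T^m,S_m f)$, completing the proof.
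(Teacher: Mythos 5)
Your upper bound is correct and is essentially the paper's argument. The lower bound, however, has two genuine gaps. First, you only ever compare quantities indexed by times and pre-image depths that are multiples of $m$: your key inequality relates $P_{\text{pre},mn,\omega}(T,f,\epsilon,(T^{mk}_\omega)^{-1}x)$ to a $T^m$-quantity, but the definition of $P_{\text{pre}}(T,f,\epsilon)$ takes $\limsup$ over \emph{all} times $l$ and $\sup$ over \emph{all} depths $k\ge l$, and for $k\not\equiv 0\pmod m$ the set $(T^k_\omega)^{-1}x$ is not a $T^m$-pre-image set at all. The paper's essential device, which you are missing, is to write $k=k_1m+q$ and push the base point forward, $x'=T_{\vartheta^{k+m-q-1}\omega}\cdots T_{\vartheta^k\omega}x$, so that $(T^k_\omega)^{-1}x\subset((T^m)^{k'}_\omega)^{-1}x'$ with $k'=k_1+1\ge n$; a separated subset of the smaller set is one of the larger, and the supremum over $x'\in X_{\mathcal E}$ absorbs the change of base point. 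Likewise, for a general time $l$ with $mn\le l<m(n+1)$ one must split $S_lf=\sum_{i<n}S_mf\circ\Theta^{mi}+\sum_{j=mn}^{l-1}f\circ\Theta^j$ and bound the tail by $\exp\sum_{j=mn}^{l-1}\|f(\vartheta^j\omega)\|_\infty$, whose integral is at most $m\|f\|$ and so vanishes after dividing by $l$. Without these two reductions you have only bounded a subfamily of the terms whose supremum/limsup defines $P_{\text{pre}}(T,f,\epsilon)$, which does not suffice.

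Second, your localization scheme for the random modulus of continuity does not work as described. The good set is $\{\tilde\delta_n\ge\delta_0\}=\bigcap_{i=0}^{n-1}\vartheta^{-mi}\{\delta(\cdot,\epsilon)\ge\delta_0\}$, an intersection of $n$ translates of a set of measure $1-\eta$; its measure is only guaranteed to be at least $1-n\eta$ and in general (by recurrence) tends to $\mathbf P$ of the set of points whose orbit never enters $\{\delta<\delta_0\}$, which is typically $0$ whenever $\mathbf P(\delta<\delta_0)>0$. Hence for fixed $\eta$ the bad set is not small for large $n$, and your remainder term $\int_{\text{bad}}\bigl(\|f(\omega)\|_\infty+\tfrac{1}{mn}\log s_{mn}(\omega,\epsilon)\bigr)\,d\mathbf P$ does not vanish as $n\to\infty$; the order of limits ($n\to\infty$ before $\eta\to0$) kills the argument. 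The paper avoids this entirely by choosing a single nonrandom $\delta$ such that $d(y,z)\le\delta$ implies $d^\omega_m(y,z)\le\epsilon$ for all $\omega$ (i.e., it implicitly assumes a uniform modulus of continuity for the first $m$ iterates); you have correctly spotted that this uniformity is not automatic for a bundle RDS, but your proposed repair needs to be replaced by something that controls the whole orbit segment, not just a fixed-measure bad set per fiber.
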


\begin{proof}
Fix $m\in \mathbb{N}$. Let $n\in \mathbb{N}$, $k\geq n$ and $x\in X_
\mathcal{E}$. If $E$ is an $(\omega,n,\epsilon)$-separated set of
$((T^m)^k_{\omega})^{-1}x$ for $T^m$, then $E$ is also an
$(\omega,mn, \epsilon)$-separated set of $(T^{mk}_{\omega})^{-1}x$
for $T$. Since $(T^m)^n_{\omega}=T^{mn}_{\omega}$, so
$$
P_{\text{pre}, \,n, \,\omega}(T^m,
S_mf,\epsilon,((T^m)^k_{\omega})^{-1}x)\leq P_{\text{pre}, \,mn ,\,
\omega}(T,f, \epsilon, (T^{mk}_{\omega})^{-1}x)
$$
Hence
\begin{align*}
&P_{\text{pre}}(T^m, S_m f, \epsilon)\\
=&\limsup_{n\rightarrow \infty} \frac{1}{n}\sup_{k \geq n}
\sup_{x\in  X_\mathcal{E}} \int\log P_{\text{pre}, \,n,\,
\omega}(T^m, S_mf,\epsilon,((T^m)^k_{\omega})^{-1}x )\,
d\mathbf{P}(\omega)   \\
\leq & \limsup_{n\rightarrow \infty} \frac{1}{n}\sup_{k \geq n}
\sup_{x\in  X_\mathcal{E}} \int\log P_{\text{pre}, \,mn ,\,
\omega}(T,f, \epsilon, (T^{mk}_{\omega})^{-1}x) d\mathbf{P}(\omega)
\\
\leq & m \limsup_{n\rightarrow \infty} \frac{1}{mn}\sup_{k \geq mn}
\sup_{x\in  X_\mathcal{E}} \int\log P_{\text{pre}, \,mn ,\,
\omega}(T,f, \epsilon,
(T^k_{\omega})^{-1}x) d\mathbf{P}(\omega) \\
\leq & m\limsup_{n\rightarrow \infty} \frac{1}{n}\sup_{k \geq n}
\sup_{x\in  X_\mathcal{E}} \int\log P_{\text{pre}, \,n ,\,
\omega}(T,f, \epsilon,
(T^k_{\omega})^{-1}x) d\mathbf{P}(\omega)\\
 \leq &m P_{\text{pre}}(T, f, \epsilon).
\end{align*}
Therefore, $P_{\text{pre}}(T^m,S_m f)\leq m P_{\text{pre}}(T,f)$.

For any $\epsilon > 0$, by the continuity of $T^m$, there exists
some small enough $\delta >0$ such that if $d(y,z)\leq \delta$, $y,z
\in \mathcal{E}_{\omega}$, then $d^m_{\omega}(y,z)\leq \epsilon$.
For any positive integer $n$, there exists some integer $l$ such
that $mn\leq l < m(n+1)$. It is easy to see that any $(\omega, l,
\epsilon)$-separated set of $(T^k_{\omega})^{-1}x$ for $T$ is also
an $(\omega, n, \delta)$-separated set of $(T^k_{\omega})^{-1}x$ for
$T^m$. For $k \geq l$, let $k=k_1m+q$, where $k_1,q \in \mathbb{N},
0\leq q <m$, then $k_1\geq n$. Let $x'=T_{\vartheta^{k+m-q-1}\omega}
\cdots T_{\vartheta^k\omega}x $, then $x\in
(T_{\vartheta^{k+m-q-1}\omega} \cdots
T_{\vartheta^k\omega})^{-1}x'$, and then $(T_{\omega}^k)^{-1}x =
(T_{\omega}^{k_1m+q})^{-1}x\subset
(T_{\omega}^{(k_1+1)m})^{-1}x'=((T^m)_{\omega}^{k'})^{-1}x'$, where
$k'=k_1+1$. Hence, any $(\omega,n,\delta)$-separated set of
$(T_{\omega}^k)^{-1}x$ for $T^m$ is also an
$(\omega,n,\delta)$-separated set of $((T^m)_{\omega}^{k'})^{-1}x'$
for $T^m$. Therefore, for $k \geq l$,
\begin{align*}
& P_{\text{pre},\,l,\, \omega}(T,f,\epsilon,
(T^k_{\omega})^{-1}x)\\
&= \sup \bigl\{ \sum_{ y\in E}\exp S_lf(\omega,y): E \, \text{is an}
\, (\omega,l, \epsilon)\, \text{-separated set of}\,\,
(T^k_{\omega})^{-1}x \,\, \text{for}\,\,T
\bigr\} \\
\begin{split}
= \sup \bigl\{ \sum_{y\in E}\exp
(\sum_{i=0}^{n-1}S_mf(\vartheta^{mi}\omega, T^{mi}_{\omega}y)+
\sum^{l-1}_{j=mn}f(\vartheta^j\omega ,T_{\omega}^jy)):\quad\quad\quad\quad\quad\quad \\
E \, \text{is an} \, (\omega,l, \epsilon)\, \text{-separated set
of}\,\, (T^k_{\omega})^{-1}x \,\, \text{for}\,\,T \bigr \}
\end{split}\\
\begin{split}
\leq \exp \sum_{j=mn}^{l-1}\|f(\vartheta^j\omega)\|_{\infty} \sup
\bigl\{ \sum_{y\in E}\exp
\sum_{i=0}^{n-1}S_mf((\vartheta^m)^i\omega,
(T^m)^i_{\omega}y):\quad\quad\quad\quad\quad\quad  \\
E \, \text{is an} \, (\omega,n, \delta)\, \text{-separated set
of}\,\, (T^k_{\omega})^{-1}x \,\, \text{for}\,\, T^m \bigr\}
\end{split} \\
\begin{split}
\leq \exp \sum_{j=mn}^{l-1}\|f(\vartheta^j\omega)\|_{\infty} \sup
\bigl\{ \sum_{y\in E}\exp
\sum_{i=0}^{n-1}S_mf((\vartheta^m)^i\omega,
(T^m)^i_{\omega}y):\quad\quad\quad\quad\quad\quad  \\
E \, \text{is an} \, (\omega,n, \delta)\, \text{-separated set
of}\,\, ((T^m)_{\omega}^{k'})^{-1}x' \,\, \text{for}\,\, T^m \bigr\}
\end{split}\\
\begin{split}
= \exp \sum_{j=mn}^{l-1}\|f(\vartheta^j\omega)\|_{\infty} P_{{\rm
pre},\,n,\,\omega}(T^m, S_m f, \delta,
((T^m)_{\omega}^{k'})^{-1}x').
\end{split}
\end{align*}
Since $f \in \mathbf{L}_{\mathcal{E}}^1 (\Omega, \mathcal{C}(X))$,
so $\int \sum_{j=mn}^{l-1}\|f(\vartheta^j\omega)\|_{\infty}
\,d\mathbf{P}(\omega) \leq m\|f\|<\infty$. Then by the definition of
$P_{\text{pre}}(T,f, \epsilon)$, we have
\begin{align*}
&P_{\text{pre}}(T, f, \epsilon)\\
&=\limsup_{l\rightarrow \infty} \frac{1}{l}\sup_{k \geq l}
\sup_{x\in X_\mathcal{E}} \int\log P_{\text{pre}, \,l,\, \omega}(T,
f,\epsilon,(T^k_{\omega})^{-1}x )\,
d\mathbf{P}(\omega)\\
\begin{split}
\leq \limsup_{n\rightarrow \infty} \frac{1}{mn}\sup_{k' \geq n}
\sup_{x'\in X_\mathcal{E}} \int \bigl(
\sum_{j=mn}^{l-1}\|f(\vartheta^j\omega)\|_{\infty}
\quad\quad\quad\quad\quad\quad\quad\quad\quad\quad\quad\quad\quad\quad
\\
+ \log P_{\text{pre}, \,n,\, \omega}(T^m,
S_mf,\delta,((T^m)^{k'}_{\omega})^{-1}x' ) \bigr)\,
d\mathbf{P}(\omega)
\end{split}\\
&=\frac{1}{m}\limsup_{n\rightarrow \infty} \frac{1}{n}\sup_{k' \geq
n} \sup_{x'\in X_\mathcal{E}} \int\log P_{\text{pre}, \,n,\,
\omega}(T^m, S_mf,\delta,((T^m)^{k'}_{\omega})^{-1}x' ) \,
d\mathbf{P}(\omega)
\\
&=\frac{1}{m}P_{\rm pre}(T^m,S_mf,\delta).
\end{align*}
Hence,
$$
mP_{\text{pre}}(T,f, \epsilon)\leq P_{\text{pre}}(T^m,S_mf,\delta).
$$
If $\epsilon\rightarrow 0$, then $\delta\rightarrow 0$. Hence we
obtain $mP_{\text{pre}}(T,f)\leq P_{\text{pre}}(T^m,S_mf)$ and
complete the proof.
\end{proof}

\section{Pre-image variational principle for bundle RDS}\label{Sec4}

\begin{theorem}\label{theorem}
If $T$ is a continuous bundle RDS on $\mathcal{E}$ and $f\in
\mathbf{L}^1_{\mathcal{E}}(\Omega, \mathcal{C}(X))$, then
$$
P_{{\rm pre}}(T,f)=\sup\{ h_{{\rm pre},\,\mu}^{(r)}(T)+ \int f\,d\mu
:\mu \in \mathcal{M}_{\mathbf{P}}^1(\mathcal{E},T)  \}.
$$
\end{theorem}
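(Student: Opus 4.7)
The plan is to prove the two inequalities following Misiurewicz's strategy as adapted to random bundle transformations by Kifer and to the pre-image setting by Cheng--Newhouse and Zhu.

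For the easy direction $\sup_\mu \bigl(h_{\text{pre},\mu}^{(r)}(T)+\int f\,d\mu\bigr) \le P_{\text{pre}}(T,f)$, I would fix $\mu\in\mathcal{M}_{\mathbf{P}}^1(\mathcal{E},T)$ and a partition $\mathcal{Q}=\{(\Omega\times P_i)\cap\mathcal{E}\}$ with $\mathrm{diam}(P_i)<\epsilon$. Using the fiberwise expression from the proposition and the decreasing martingale identity
$$H_{\mu_\omega}\bigl(\mathcal{Q}^n_\omega\mid\mathcal{B}^-_\omega\bigr)=\lim_{k\to\infty}H_{\mu_\omega}\bigl(\mathcal{Q}^n_\omega\mid(T^k_\omega)^{-1}\mathcal{B}_{\vartheta^k\omega}\bigr),$$
I disintegrate the right-hand side over the pre-image atoms $(T^k_\omega)^{-1}\{x\}$. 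On each such atom the distinct cells of $\mathcal{Q}^n_\omega=\bigvee_{i=0}^{n-1}(T^i_\omega)^{-1}\mathcal{Q}(\vartheta^i\omega)$ that meet it pick out a set of cardinality at most $s_n(\omega,\epsilon,(T^k_\omega)^{-1}x)$, and the elementary variational inequality for the finite measure $\rho$ supported on the atom gives
$$H_\rho(\mathcal{Q}^n_\omega)+\int S_nf\,d\rho\le \log P_{\text{pre},n,\omega}\bigl(T,f,\epsilon,(T^k_\omega)^{-1}x\bigr).$$
Integrating in $x$ against $T^k_\omega\mu_\omega$, then in $\omega$ against $\mathbf{P}$, using $\Theta$-invariance to absorb $\int S_nf\,d\mu=n\int f\,d\mu$, dividing by $n$, sending $n,k\to\infty$, $\epsilon\to 0$ and refining $\mathcal{Q}$, produces the upper bound.

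For the hard direction $P_{\text{pre}}(T,f)\le\sup_\mu(\cdots)$ I fix small $\epsilon>0$ and use the definition \eqref{predefn} to extract $n_j\uparrow\infty$, $k_j\ge n_j$ and $x_j\in X_\mathcal{E}$ whose integrand is within $1/j$ of $P_{\text{pre}}(T,f,\epsilon)$. Lemma \ref{lem} then supplies measurable maximal $(\omega,n_j,\epsilon)$-separated sets $G_{j,\omega}\subset(T^{k_j}_\omega)^{-1}x_j$ whose weighted exponential sums approximate $P_{\text{pre},n_j,\omega}$. Define fiber measures
$$\nu_{j,\omega}=\frac{\sum_{y\in G_{j,\omega}}e^{S_{n_j}f(\omega,y)}\,\delta_y}{\sum_{y\in G_{j,\omega}}e^{S_{n_j}f(\omega,y)}},$$
assemble them into $\nu_j\in\mathcal{P}_{\mathbf{P}}(\mathcal{E})$, and set $\mu_j=\frac{1}{n_j}\sum_{i=0}^{n_j-1}\Theta^i_*\nu_j$. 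Passing to a weak-$*$ convergent subsequence gives a limit $\mu\in\mathcal{M}_{\mathbf{P}}^1(\mathcal{E},T)$ by the standard argument. Choose a partition $\mathcal{P}=\{P_i\}$ of $X$ of diameter $<\epsilon$ with $\mu$-null boundaries (via a Fubini perturbation as in Kifer), let $\mathcal{Q}$ be the induced partition of $\mathcal{E}$. Because the points of $G_{j,\omega}$ lie in distinct atoms of $\mathcal{Q}^{n_j}_\omega$, the Gibbs-like form of $\nu_{j,\omega}$ yields exactly
$$H_{\nu_{j,\omega}}\bigl(\mathcal{Q}^{n_j}_\omega\bigr)+\int S_{n_j}f\,d\nu_{j,\omega}=\log\sum_{y\in G_{j,\omega}}e^{S_{n_j}f(\omega,y)}.$$
Now apply the Misiurewicz averaging (fiberwise, and conditionally on $\mathcal{B}^-_\omega$), splitting $n_j=qm+r$ to trade $H_{\nu_{j,\omega}}(\mathcal{Q}^{n_j}_\omega)$ for $q$ blocks of $H_{\mu_{j,\omega}}(\mathcal{Q}^m_\omega\mid\mathcal{B}^-_\omega)$ plus bounded error, integrate in $\omega$, divide by $n_j$, and pass to the weak-$*$ limit using upper semicontinuity of entropy at the chosen $\mathcal{Q}$; this gives $h_{\text{pre},\mu}^{(r)}(T,\mathcal{Q})+\int f\,d\mu\ge P_{\text{pre}}(T,f,\epsilon)$, and letting $\epsilon\to 0$ completes the proof.

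The main obstacle is precisely this last step: upgrading the identity for the unconditional entropy of $\nu_j$ to a lower bound on the $\mathcal{B}^-_\omega$-conditional entropy of $\mu$. The crucial observation is that $\mathrm{supp}\,\nu_{j,\omega}$ is contained in the single pre-image fibre $(T^{k_j}_\omega)^{-1}x_j$, which lies inside one atom of $(T^{k_j}_\omega)^{-1}\mathcal{B}_{\vartheta^{k_j}\omega}$; as $k_j\ge n_j\to\infty$, this collapsing of fibres allows the unconditional entropy of $\mu_j$ to be transferred to the $\mathcal{B}^-_\omega$-conditional entropy of $\mu$ in the weak-$*$ limit. Two accompanying technical points require care: preserving $\omega$-measurability of every random object along the construction (handled by Lemma \ref{lem}), and selecting a random partition with uniformly small fibre diameters and $\mu$-null boundaries, so that the entropy functional is upper semicontinuous along the convergent subsequence.
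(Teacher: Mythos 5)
Your overall architecture is the same as the paper's: both directions follow Misiurewicz's scheme as carried over to bundle RDS by Kifer and to the pre-image setting by Cheng--Newhouse and Zhu, and your sketch of the hard direction --- Gibbs-weighted atomic measures on measurable maximal separated subsets of a single pre-image fibre (via Lemma \ref{lem}), Ces\`aro averaging, a null-boundary partition of diameter $<\epsilon$, the block decomposition $n_j=qm+r$, and the transfer from the unconditional entropy of $\nu_j$ to the $\mathcal{B}^-_\omega$-conditional entropy of the weak-$*$ limit by exploiting that $\mathrm{supp}\,\nu_{j,\omega}$ is a single atom of $(T^{k_j}_\omega)^{-1}\mathcal{B}_{\vartheta^{k_j}\omega}$ --- is precisely the paper's argument, including its one genuinely delicate point.

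The easy direction, however, contains a concrete error. You claim that the cells of $\mathcal{Q}^n_\omega$ meeting a pre-image atom $(T^k_\omega)^{-1}x$ ``pick out a set of cardinality at most $s_n(\omega,\epsilon,(T^k_\omega)^{-1}x)$,'' i.e.\ that one representative per cell forms an $(\omega,n,\epsilon)$-separated set. A partition of diameter $<\epsilon$ separates in the wrong direction: two points in the \emph{same} cell of $\mathcal{Q}^n_\omega$ are $d^\omega_n$-close, but two points in \emph{different} cells can be arbitrarily close (on opposite sides of a cell boundary), so the number of cells meeting the atom can far exceed $s_n$, and your clean inequality $H_\rho(\mathcal{Q}^n_\omega)+\int S_nf\,d\rho\le\log P_{{\rm pre},\,n,\,\omega}(T,f,\epsilon,(T^k_\omega)^{-1}x)$ fails. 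The paper repairs this in the standard way: shrink each $P_i$ to a compact $Q_i$ (costing an additive $1$ in entropy, inequality \eqref{entr}); use the Lebesgue number of $\{Q_0\cup Q_1,\dots,Q_0\cup Q_k\}$ together with the upper semicontinuity of $x\mapsto (T^k_\omega)^{-1}x$ to map cell-representatives into a maximal $(\omega,n,\delta)$-separated set of a nearby fibre $(T^k_\omega)^{-1}u_B$ with multiplicity at most $2^n$ (costing $n\log 2$ plus the modulus-of-continuity term $\sum_{i=0}^{n-1}\kappa^{(f)}_{2\delta}(\vartheta^i\omega)$); and then remove the residual constant $\log 2+1$ by running the whole argument on $T^n$ and $S_nf$ and invoking the power rule $P_{{\rm pre}}(T^m,S_mf)=mP_{{\rm pre}}(T,f)$ of Proposition \ref{prop}. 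None of these three steps appears in your plan; without them you only reach $h^{(r)}_{{\rm pre},\,\mu}(T)+\int f\,d\mu\le P_{{\rm pre}}(T,f)+\log 2+1$, not the theorem.
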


\begin{proof}
 (1)
Let $\mu \in \mathcal{M}_{\mathbf{P}}^1(\mathcal{E},T) $,
  $\mathcal{P}=\{P_1, \cdots,
 P_k\}$  be a finite measurable partition of $X$, and $\epsilon$ be
 a  positive number with $\epsilon k \log k <1$.
 Let $\mathcal{P}(\omega)=\{P_1(\omega), \cdots,
 P_k(\omega)\}$ be the corresponding partition of $\mathcal{E}_{\omega}$,
 where $P_i(\omega)=P_i \cap \mathcal{E}_{\omega}, i=1,\cdots,
 k$.  By the
 regularity of $\mu$, we can find compact sets $Q_i \subset P_i, 1 \leq i \leq
 k$, such that
 \begin{equation*}
 \int \mu_{\omega}(P_i(\omega)\backslash
Q_i(\omega)) \, d\mathbf{P}(\omega) < \epsilon,
 \end{equation*}
where $Q_i(\omega)= Q_i \cap \mathcal{E}_{\omega}$. Let
$\mathcal{Q}(\omega)=\{Q_0(\omega),\cdots, Q_k(\omega) \}$ be the
partition of $\mathcal{E}_{\omega}$, where $Q_0(\omega)=
\mathcal{E}_{\omega} \backslash \bigcup_{i=1}^kQ_i(\omega)$. Then by
the proof  of \cite[Theorem 4]{Zhu} and   \cite[Lemma II.
1.3]{Kifer}, the following inequality holds (See \cite{Zhu} for
details),
\begin{equation}\label{entr}
h_{\text{pre},\,\mu}^{(r)}(T, \Omega \times \mathcal{P})\leq
h_{\text{pre},\,\mu}^{(r)}(T, \Omega \times \mathcal{Q}) +1,
\end{equation}
where $\Omega \times \mathcal{P}$ (respectively by $\Omega \times
\mathcal{Q}$) is the partition of $ \Omega \times X$ into sets
$\Omega \times P_i$ (respectively by $\Omega \times Q_i$).

Let
$$
\mathcal{Q}_n(\omega)=
\bigvee_{i=0}^{n-1}(T^i_{\omega})^{-1}\mathcal{Q}(\vartheta^i\omega).
$$
For each $k$, we choose a non-decreasing sequence of finite
partitions $\beta^k_1\leq \beta^k_2\leq \cdots$ with diameters
tending to zero for which
$\mathcal{B}_{\vartheta^k\omega}=\bigvee_{j=1}^{\infty}\beta_j^k$ up
to set of measure 0, and satisfying
\begin{equation*}
H_{\mu_{\omega}}(\mathcal{Q}_n(\omega)|
(T^k_{\omega})^{-1}\mathcal{B}_{\vartheta^k\omega})=\lim_{j\rightarrow
\infty
}H_{\mu_{\omega}}(\mathcal{Q}_n(\omega)|(T^k_{\omega})^{-1}\beta_j^k
).
\end{equation*}

Let $\eta=\{Q_0\cup Q_1, \cdots, Q_0\cup Q_k\}$, and $\tau$ be the
Lebesgue number of $\eta$. We pick a small nonrandom $\delta$ with
$0<4\delta<\tau$. Let $\epsilon_1:=\epsilon_1(\omega,n, \epsilon)>0$
for each $\omega\in \Omega$, such that if $d(x,y)<\epsilon_1$, $x, y
\in \mathcal{E}_{\omega}$, then $d^{\omega}_n(x,y)<\delta$.

For each $\omega\in \Omega$, $\{(T^k_{\omega})^{-1}x: x \in
\mathcal{E}_{\vartheta^k\omega}\}$ is an upper semi-continuous
decomposition of $\mathcal{E}_{\omega}$. Thus for each $x\in
\mathcal{E}_{\vartheta^k\omega}$, there is an
$\epsilon_2:=\epsilon_2(\omega, x, k, \epsilon_1)$ such that if
$d(x,y)<\epsilon_2$, $y \in \mathcal{E}_{\vartheta^k\omega}$ and
$y_1\in (T^k_{\omega})^{-1}y$, then there is an $x_1 \in
(T^k_{\omega})^{-1}x$ such that $d(x_1,y_1)<\epsilon_1$. Let
$\mathcal{U}_k$ be the collection of open $\epsilon_2$ balls in
$\mathcal{E}_{\vartheta^k\omega}$ as $x$ varies in
$\mathcal{E}_{\vartheta^k\omega}$ and  $\epsilon_3$ be the Lebesgue
number for $\mathcal{U}_k$.

Since diam$(\beta_j^k)\rightarrow 0$ as $j\rightarrow \infty$, we
can choose large enough $j$ such that for each $B\in \beta_j^k$,
diam$\overline{B}<\epsilon_3$. For each $A \in
(T^k_{\omega})^{-1}\beta_j^k$, let $\mu_{\omega,A}$ be the
conditional measure of $\mu_{\omega}$ restricted to $A$,
$\mathcal{Q}_n(\omega,A)=\{A\cap C: C \in \mathcal{Q}_n(\omega),
A\cap C \neq \emptyset \}$, $ S_n^*f(\omega,A\cap
C)=\sup\{S_nf(\omega,x): x\in A\cap C \} $. Then by the standard
inequality
$$ \sum_{i=1}^m p_i(a_i-\log p_i)\leq \log
\sum_{i=1}^m\exp a_i ,$$
 for any probability vector $(p_1,\cdots,
p_m)$, we have
\begin{equation}\label{inseq2}
\begin{split}
&H_{\mu_{\omega}}(\mathcal{Q}_n(\omega)|(T^k_{\omega})^{-1}\beta_j^k)+
\int_{\mathcal{E}_{\omega}}S_nf\,d\mu_{\omega}\\
&=\sum_{A\in (T^k_{\omega})^{-1}\beta_j^k} \bigl(
\mu_{\omega}(A)H_{\mu_{\omega,A}}(\mathcal{Q}_n(\omega, A))+\int_A
S_nf \,d\mu_{\omega}
\bigr)\\
&\leq \sum_{A\in
(T^k_{\omega})^{-1}\beta_j^k}\mu_{\omega}(A)\!\!\!\!\sum_{A\cap C\in
\mathcal{Q}_n(\omega, A)}\!\!\!\! \mu_{\omega, A}(A\cap C)
\bigl(-\log
\mu_{\omega, A}(A\cap C) + S_n^*f (\omega, A\cap C) \bigr) \\
&\leq \max_{A\in (T^k_{\omega})^{-1}\beta_j^k} \log
\!\!\!\!\sum_{A\cap C\in \mathcal{Q}_n(\omega, A)} \!\!\!\!\exp
S_n^*f (\omega, A\cap C).
\end{split}
\end{equation}

In the sequel, let $A$ be the maximal one satisfying the above
inequality and $B\in \beta_j^k$ with $A=(T^k_{\omega})^{-1}B$. For
each $A\cap C \in \mathcal{Q}_n(\omega, A)$, we choose some point
$x_C \in \overline{A\cap C}$ such that $S_nf(\omega, x_C)=S_n^*f
(\omega, A\cap C)$.

Since $T^k_{\omega} (x_C) \in \overline{B}$, and
diam$\overline{B}<\epsilon_3$, there is an $u_B\in
\mathcal{E}_{\vartheta^k\omega} $ such that $d(u_B,T^k_{\omega}
(x_C))$ $<\epsilon_2$. Hence there is $y_1 \in (T^k_{\omega})^{-1}
u_B$ such that $d(x_C, y_1)<\epsilon_1$ and then $d_n^{\omega}(x_C,
y_1)<\delta$.

Let $E_A^{\omega}$ be a maximal $(\omega, n, \delta)$-separated set
in $(T^k_{\omega})^{-1} u_B$. Since $E_A^{\omega}$ is also a
spanning set, there is a point $z(x_C)\in E_A^{\omega}$ such that
$d_n^{\omega}(y_1,z(x_C))\leq \delta$, then
$d_n^{\omega}(x_C,z(x_C))\leq 2\delta$ and hence
\begin{equation}\label{inseq1}
S_n^*f(\omega, A\cap C)\leq S_nf (\omega, z(x_C))+ \sum_{i=0}^{n-1}
\kappa_{2\delta}^{(f)}(\vartheta^i\omega).
\end{equation}
We now show that if $y\in E_A^{\omega}$, then
\begin{equation}\label{card}
\text{card}\{A\cap C \in \mathcal{Q}_n(\omega, A): z(x_C)=y\}\leq
2^n.
\end{equation}
 Suppose $z(x_C)=z(x_{\widetilde{C}})$. There exist $ x_C\in \overline{A\cap
 C}$ and $x_{\widetilde{C}}\in \overline{A\cap \widetilde{C}}$ so
 that $d_n^{\omega}(x_C,z(x_C) )< 2\delta$ and
 $d_n^{\omega}(x_{\widetilde{C}},z(x_{\widetilde{C}}))<2\delta$.
 Therefore $d_n^{\omega}(x_C,x_{\widetilde{C}})<4\delta$; so
 $T_{\omega}^i(x_C)$ and $T_{\omega}^i(x_{\widetilde{C}})$ are in
 the same element of $\eta$, say $Q_0\cup Q_{j_i}, 0\leq i<n$. Hence
 there are at most $2^n$ elements $A\cap C$ of $\mathcal{Q}_n(\omega,
 A)$ equal to a fixed member of $E_A^{\omega}$.

Combining \eqref{inseq1} and \eqref{card}, we get
\begin{equation*}
\sum_{A\cap C\in \mathcal{Q}_n(\omega, A)}\exp S_n^*f (\omega, A\cap
C) \leq 2^n  \sum_{y\in E_A^{\omega}} \exp
\bigl(S_nf(\omega,y)+\sum_{i=0}^{n-1}
\kappa_{2\delta}^{(f)}(\vartheta^i\omega) \bigr).
\end{equation*}
Taking the logarithm of both parts and using the resulting
inequality in order to estimate the righthand side of
\eqref{inseq2}, we obtain
\begin{align*}
&H_{\mu_{\omega}}(\mathcal{Q}_n(\omega)|(T^k_{\omega})^{-1}\beta_j^k)+
\int_{\mathcal{E}_{\omega}}S_nf\,d\mu_{\omega}\\
 \leq &n\log 2 +\log \sum_{y\in E_A^{\omega}} \exp S_nf(\omega,y)+\sum_{i=0}^{n-1}
\kappa_{2\delta}^{(f)}(\vartheta^i\omega)\\
\leq &n\log 2 + \log P_{\text{pre},n, \omega}(T, f, \delta,
(T_{\omega}^k)^{-1}u_B)+\sum_{i=0}^{n-1}
\kappa_{2\delta}^{(f)}(\vartheta^i\omega).
\end{align*}
Integrating this with respect to $\mathbf{P}$ and letting
$j\rightarrow \infty$, we have
\begin{align*}
&\int H_{\mu_{\omega}}(\mathcal{Q}_n(\omega)|
(T^k_{\omega})^{-1}\mathcal{B}_{\vartheta^k\omega})d\mathbf{P}(\omega)+
\int S_nf\,d\mu\\
\leq &n\log 2 + \int \log P_{\text{pre},n, \omega}(T, f, \delta,
(T_{\omega}^k)^{-1}u_B)d\mathbf{P}(\omega) + \int\sum_{i=0}^{n-1}
\kappa_{2\delta}^{(f)}(\vartheta^i\omega) d\mathbf{P}(\omega)\\
\leq &n\log 2 + \sup_{x\in \mathcal{E}_{\vartheta^k\omega}} \int
\log P_{\text{pre},n, \omega}(T, f, \delta,
(T_{\omega}^k)^{-1}x)d\mathbf{P}(\omega) + \int\sum_{i=0}^{n-1}
\kappa_{2\delta}^{(f)}(\vartheta^i\omega) d\mathbf{P}(\omega).
\end{align*}
Letting $k\rightarrow \infty$, dividing by $n$ and letting
$n\rightarrow \infty$, by Proposition \ref{eq1} and the equality
\eqref{predefn}, we have
\begin{align*}
h^{(r)}_{\text{pre},\,\mu}(T, \Omega\times \mathcal{Q})+ \int
f\,d\mu \leq \log 2 + P_{\text{pre}}(T,f,\delta) + \int
\kappa_{2\delta}^{(f)} d\mathbf{P}(\omega).
\end{align*}
Using \eqref{entr}, we derive the inequality
$$
h^{(r)}_{\text{pre},\,\mu}(T, \Omega\times \mathcal{P})+ \int
f\,d\mu \leq \log 2 +1 + P_{\text{pre}}(T,f,\delta) + \int
\kappa_{2\delta}^{(f)} d\mathbf{P}(\omega).
$$
Since this is true for all finite partitions $\mathcal{P}$ of $X$
and all positive $\delta$, we have
$$
h^{(r)}_{\text{pre},\,\mu}(T)+\int f\,d\mu \leq \log 2 +1 +
P_{\text{pre}}(T,f).
$$
Since $h^{(r)}_{\text{pre},\,\mu}(T^m)=m
h^{(r)}_{\text{pre},\mu}(T)$ (See \cite[Proposition 4]{Zhu} ), the
same arguments as above applied to $T^n$ and to $S_nf$  yield
$$
n(h^{(r)}_{\text{pre},\,\mu}(T )+\int f\,d\mu )\leq \log 2 +1 +
P_{\text{pre}}(T^n,S_nf).
$$
Taking into account Proposition \ref{prop}, dividing by $n$ and
letting $n\rightarrow \infty$, we conclude that
$$
h^{(r)}_{\text{pre},\,\mu}(T )+\int f\,d\mu \leq
P_{\text{pre}}(T,f).
$$

(2) By the equality \ref{predefn}, we can choose a sequence
$n_i\rightarrow \infty$, $k_i\geq n_i$, and points
$x_{\vartheta^{k_i}\omega}\in \mathcal{E}_{\vartheta^{k_i}\omega}$
for each $\omega \in \Omega$ such that
\begin{equation*}
P_{\text{pre}}(T,f, \epsilon)=\lim_{i\rightarrow \infty}
\frac{1}{n_i}\int \log P_{\text{pre},\, n_i, \,\omega}(T,f,
\epsilon,
(T_{\omega}^{k_i})^{-1}x_{\vartheta^{k_i}\omega})\,d\mathbf{P}(\omega).
\end{equation*}
For a small nonrandom $\epsilon>0$, by Lemma \ref{lem}, we can
choose a family of maximal $(\omega, n_i,\epsilon )$-separated sets
$G(\omega, n_i,\epsilon)\subset
(T_{\omega}^{k_i})^{-1}x_{\vartheta^{k_i}\omega}$ $ \subset
\mathcal{E}_{\omega}$, which are measurable in $\omega$, such that
\begin{equation}\label{inseq7}
\sum_{x\in G(\omega, n_i,\epsilon)}\exp S_{n_i}f(\omega,x)\geq
\frac{1}{e}P_{\text{pre},\, n_i, \,\omega}(T,f, \epsilon,
(T_{\omega}^{k_i})^{-1}x_{\vartheta^{k_i}\omega}).
\end{equation}
Next, we define probability measures $\nu^{(i)}$ on $\mathcal{E}$
via their measurable disintegrations
\begin{equation*}
\nu^{(i)}_{\omega}=\frac{\sum_{x\in G(\omega, n_i,\epsilon)}\exp
S_{n_i}f(\omega,x)\delta_x }{\sum_{x\in G(\omega, n_i,\epsilon)}\exp
S_{n_i}f(\omega,x)}
\end{equation*}
so that
$d\nu^{(i)}(\omega,x)=d\nu^{(i)}_{\omega}(x)\,d\mathbf{P}(\omega)$.

Let
\begin{equation*}
\mu^{(i)}=\frac{1}{n_i}\sum_{j=0}^{n_i-1}\Theta^j\nu^{(i)}.
\end{equation*}
Then, by Lemma 2.1 (i)--(ii) of \cite{Kifer2001}, we can choose a
subsequence $n_{i_l}$ of $\{n_i\}$ such that $\lim_{l\rightarrow
\infty}\mu^{(i_l)}=\mu$ for some $\mu \in \mathcal
{M}_{\mathbf{P}}^1(\mathcal{E},T)$. Without loss of generality, we
still assume that $\lim_{i\rightarrow \infty}\mu^{(i)}=\mu $.

Next, we choose a partition $\mathcal{P}=\{P_1,\cdots,P_k\}$ of $X$
with $\text{diam}\mathcal{P}\leq \epsilon$, which satisfies $\int
\mu_{\omega}(\partial P_i) \, d\mathbf{P}(\omega)=0$ for all $1\leq
i \leq k$, where $\partial$ denotes the boundary. Let
$\mathcal{P}(\omega)=\{P_1(\omega), \cdots, P_k(\omega)\}$,
$P_i(\omega)=P_i \cap \mathcal{E}_{\omega}$. Since each element of
$\bigvee_{l=0}^{n_i-1}(T^l_{\omega})^{-1}\mathcal{P}(\vartheta^l\omega)$
contains at most one element of $G(\omega, n_i,\epsilon)$, then by
the inequality \eqref{inseq7}, we have
\begin{equation}\label{inseq3}
\begin{split}
&H_{\nu^{(i)}_{\omega}}\bigl(
\bigvee_{l=0}^{n_i-1}(T^l_{\omega})^{-1}\mathcal{P}(\vartheta^l\omega)
|_{(T^{k_i}_{\omega})^{-1}x_{\vartheta^{k_i}\omega}} \bigr)+ \int
S_{n_i}f(\omega)\,d\nu^{(i)}_{\omega}\\
=&\sum_{x\in G(\omega,
n_i,\epsilon)}\nu^{(i)}_{\omega}(\{x\})(-\log\nu^{(i)}_{\omega}(\{x\})+
S_{n_i}f(\omega,x)
)\\
=&\log \sum_{x\in G(\omega, n_i,\epsilon)}\exp S_{n_i}f(\omega,x)\\
\geq &\log P_{\text{pre},\, n_i, \,\omega}(T,f, \epsilon,
(T_{\omega}^{k_i})^{-1}x_{\vartheta^{k_i}\omega})-1.
\end{split}
\end{equation}

For $\omega\in \Omega$, let $\mathcal{C}_{\omega}$ be the
subcollection of $\mathcal{B}^-_{\omega}$ consisting of
$\mu_{\omega}$-null sets. For any $\sigma$-algebra $\mathcal{A}$ of
subsets of $\mathcal{E}_{\omega}$, there is an enlarged
$\sigma$-algebra $\mathcal{A}_{\mathcal{C}_{\omega}}$ defined by
$A\in \mathcal{A}_{\mathcal{C}_{\omega}}$ if and only if there are
sets $B, M, N$ such that $A=B\cup M, B\in \mathcal{A}, N\in
\mathcal{C}_{\omega}$ and $M\subset N$. The $\sigma$-algebra
$\mathcal{B}^-_{\mathcal{C}_{\omega}}$ is simply the standard
$\mu_{\omega}$-completion of $\mathcal{B}^-_{\omega}$. Let
$\mathcal{B}^k_{\omega}=((T^k_{\omega})^{-1}
\mathcal{B}_{\vartheta^k\omega} )_{\mathcal{C}_{\omega}}$ for all
$k\geq 1$. Since
$T^{-1}_{\omega}\mathcal{C}_{\vartheta\omega}\subset
\mathcal{C}_{\omega}$ for each $\omega$, we have that
$\mathcal{B}_{\omega}^1 \supset \mathcal{B}_{\omega}^2 \supset
\cdots$. Let $\mathcal{B}_{\omega}^{\infty}= \bigcap_{k\geq
1}\mathcal{B}_{\omega}^k$, then $\mathcal{B}_{\omega}^-\subset
\mathcal{B}_{\mathcal{C}_{\omega}}^-\subset
\mathcal{B}_{\omega}^{\infty}$ and
$(T_{\omega}^l)^{-1}\mathcal{B}^k_{\vartheta^l\omega}\subset
\mathcal{B}_{\omega}^{l+k}$ for all $l\geq 1$.

Similarly, let $\mathcal{C}_{\mathcal{E}}$ be the subcollection of
$(\mathcal{F}\times\mathcal{B})^-_{\mathcal{E}} $ consisting of
$\mu$-null sets, $(\mathcal{F}\times\mathcal{B})^k_{\mathcal{E}}=
(\Theta^{-k}(\mathcal{F}\times\mathcal{B})_{\mathcal{E}})
_{\mathcal{C}_{\mathcal{E}}}$ and
$(\mathcal{F}\times\mathcal{B})^{\infty}_{\mathcal{E}}=\bigcap_{k\geq
1}(\mathcal{F}\times\mathcal{B})^k_{\mathcal{E}}$. Clearly,
$(\mathcal{F}\times\mathcal{B})^1_{\mathcal{E}}\supset
(\mathcal{F}\times\mathcal{B})^2_{\mathcal{E}} \supset \cdots$,
$(\mathcal{F}\times\mathcal{B})^-_{\mathcal{E}}\subset
(\mathcal{F}\times\mathcal{B})^{\infty}_{\mathcal{E}} \subset
(\mathcal{F}\times\mathcal{B})^k_{\mathcal{E}}$ and
$\Theta^{-l}(\mathcal{F}_{\mathcal{E}}\vee
(\mathcal{F}\times\mathcal{B})^k_{\mathcal{E}})\subset
\mathcal{F}_{\mathcal{E}} \vee
(\mathcal{F}\times\mathcal{B})^{l+k}_{\mathcal{E}}$ for all $l\geq
1$. Similar to the proof of Proposition \ref{eq1}, we conclude that
for each $k$ and any finite partition $O=\{O_i\}$ of $\mathcal{E}$,
\begin{equation}\label{seq1}
H_{\nu^{(i)}}(O|\mathcal{F}_{\mathcal{E}}\vee
(\mathcal{F}\times\mathcal{B})^k_{\mathcal{E}})= \int
H_{\nu^{(i)}_{\omega}}(O(\omega)|\mathcal{B}^k_{\omega})\,
d\mathbf{P}(\omega),
\end{equation}
where $O(\omega)=\{O_i(\omega)\}$, $O_i(\omega)=\{x\in
\mathcal{E}_{\omega}: (\omega,x)\in O_i\}$.

Note that $\nu^{(i)}_{\omega}$ is supported on
$(T_{\omega}^{k_i})^{-1}x_{\vartheta^{k_i}\omega}$, the canonical
system of conditional measures induced by $\nu^{(i)}_{\omega}$ on
the measurable partition $\{(T_{\omega}^{k_i})^{-1}x|x\in
\mathcal{E}_{\vartheta^{k_i}\omega} \}$ reduces to a single measure
on the set $(T_{\omega}^{k_i})^{-1}x_{\vartheta^{k_i}\omega}$, which
 may be identified with $\nu^{(i)}_{\omega}$. Now, each element
$A\in \mathcal{B}_{\omega}^{k_i}$ can be expressed as the disjoint
union $A= B\cup C$ with $B\in
(T_{\omega}^{k_i})^{-1}\mathcal{B}_{\vartheta^{k_i}\omega}$ and
$C\in \mathcal{C}_{\omega}$.  Since $\nu^{(i)}_{\omega}$ is
supported on elements of
$(T_{\omega}^{k_i})^{-1}x_{\vartheta^{k_i}\omega}$, we have
$\nu^{(i)}_{\omega}(C)=0$. Hence for any finite partition $\gamma$
of $\mathcal{E}_{\omega}$, we have
\begin{equation}\label{inseq4}
H_{\nu^{(i)}_{\omega}}(\gamma|\mathcal{B}^{k_i}_{\omega})
=H_{\nu^{(i)}_{\omega}}(\gamma|
_{(T_{\omega}^{k_i})^{-1}x_{\vartheta^{k_i}\omega}}).
\end{equation}

Let $\mathcal{Q}=\{Q_1,\cdots, Q_k\}$, where $Q_i=(\Omega\times
P_i)\cap\mathcal{E}$, then $\mathcal{Q}$ is a partition of
$\mathcal{E}$ and $Q_i(\omega)=\{x\in \mathcal{E}_{\omega}:
(\omega,x)\in Q_i \}=P_i(\omega)$. Integrating in \eqref{inseq3}
with respect to $\mathbf{P}$, then by \eqref{seq1}, \eqref{inseq4}
and $\int S_{n_i}f \,d\nu^{(i)}=n_i \int f \,d\mu^{(i)}$, we obtain
the inequality
\begin{equation}\label{inseq5}
\begin{split}
H_{\nu^{(i)}}\bigl(\bigvee_{l=0}^{n_i-1}(\Theta^l)^{-1}\mathcal{Q}|\mathcal{F}_{\mathcal{E}}\vee
(\mathcal{F}\times\mathcal{B})^{k_i}_{\mathcal{E}} \bigr) + n_i \int
f \,
d\mu^{(i)}\\
 \geq \int \log P_{\text{pre},\, n_i, \,\omega}(T,f,
\epsilon,(T_{\omega}^{k_i})^{-1}x_{\vartheta^{k_i}\omega})\,d\mathbf{P}(\omega)-1.
\end{split}
\end{equation}
For $q, n_i\in \mathbb{N}$ with $1<q< n_i$, let $a(s)$ denote the
integer part of $(n_i-s)q^{-1}$ for all $0\leq s<q$. Then, clearly,
for each $s$, we have
\begin{equation*}
\bigvee_{l=0}^{n_i-1}(\Theta^l)^{-1}\mathcal{Q}
=\bigvee_{r=0}^{a(s)-1}(\Theta^{rq+s})^{-1}\bigvee_{t=0}^{q-1}(\Theta^t)^{-1}\mathcal{Q}\vee
\bigvee_{l\in S}(\Theta^l)^{-1}\mathcal{Q} ,
\end{equation*}
where $\text{card}S\leq 2q$.

Since $\text{card}\mathcal{Q}=k$, by the subadditivity of
conditional entropy (See \cite[Section 2.1]{Kifer}), we have
\begin{align*}
&H_{\nu^{(i)}}\bigl(\bigvee_{l=0}^{n_i-1}(\Theta^l)^{-1}\mathcal{Q}|\mathcal{F}_{\mathcal{E}}\vee
(\mathcal{F}\times\mathcal{B})^{k_i}_{\mathcal{E}} \bigr)\\
\leq &\sum_{r=0}^{a(s)-1}H_{\nu^{(i)}}
\bigl((\Theta^{rq+s})^{-1}\bigvee_{t=0}^{q-1}(\Theta^t)^{-1}\mathcal{Q}|\mathcal{F}_{\mathcal{E}}\vee
(\mathcal{F}\times\mathcal{B})^{k_i}_{\mathcal{E}}
 \bigr)
+2q\log k\\
\leq &\sum_{r=0}^{a(s)-1}H_{\nu^{(i)}}
\bigl((\Theta^{rq+s})^{-1}\bigvee_{t=0}^{q-1}(\Theta^t)^{-1}\mathcal{Q}|
(\Theta^{rq+s})^{-1}(\mathcal{F}_{\mathcal{E}}\vee
(\mathcal{F}\times\mathcal{B})^{k_i}_{\mathcal{E}})\bigr)
+2q\log k\\
=& \sum_{r=0}^{a(s)-1}H_{\Theta^{rq+s}\nu^{(i)}}
\bigl(\bigvee_{t=0}^{q-1}(\Theta^t)^{-1}\mathcal{Q}|\mathcal{F}_{\mathcal{E}}\vee
(\mathcal{F}\times\mathcal{B})^{k_i}_{\mathcal{E}}\bigr) +2q\log k .
\end{align*}
Summing this inequality over $s\in \{0,1,\cdots,q-1\}$, we get
\begin{align*}
&qH_{\nu^{(i)}} \bigl(
\bigvee_{l=0}^{n_i-1}(\Theta^l)^{-1}\mathcal{Q}|\mathcal{F}_{\mathcal{E}}\vee
(\mathcal{F}\times\mathcal{B})^{k_i}_{\mathcal{E}}
\bigr)\\
\leq &\sum_{l=0}^{n_i-1}H_{\Theta^l\nu^{(i)}}| \bigl(
\bigvee_{t=0}^{q-1}(\Theta^t)^{-1}\mathcal{Q}|\mathcal{F}_{\mathcal{E}}\vee
(\mathcal{F}\times\mathcal{B})^{k_i}_{\mathcal{E}} \bigr)
+2q^2\log k\\
\leq &n_iH_{\mu^{(i)}} \bigl(
\bigvee_{t=0}^{q-1}(\Theta^t)^{-1}\mathcal{Q}|\mathcal{F}_{\mathcal{E}}\vee
(\mathcal{F}\times\mathcal{B})^{k_i}_{\mathcal{E}} \bigr)
+2q^2\log k\\
\leq &n_iH_{\mu^{(i)}} \bigl(
\bigvee_{t=0}^{q-1}(\Theta^t)^{-1}\mathcal{Q}|\mathcal{F}_{\mathcal{E}}\vee
(\mathcal{F}\times\mathcal{B})^{\infty}_{\mathcal{E}} \bigr)
+2q^2\log k ,
\end{align*}
where the second inequality, as in Kifer's works \cite{Kifer2001},
relies on the general property of conditional entropy of partition
$\sum_ip_iH_{\eta_i}(\xi|\mathcal{A})\leq
H_{\sum_ip_i\eta_i}(\xi|\mathcal{A})$ which holds for any finite
partition $\xi$, $\sigma$-algebra $\mathcal{A}$, probability
measures $\eta_i$, and probability vector $(p_i), i=1,\ldots, n$, in
view of the convexity of $t\log t$ in the same way as that in the
unconditional case (cf. \cite[pp.183 and 188 ]{Walters} ). This
together with \eqref{inseq5} yields
\begin{align*}
&q\int \log P_{\text{pre},\, n_i, \,\omega}(T,f,
\epsilon,(T_{\omega}^{k_i})^{-1}x_{\vartheta^{k_i}\omega})\,d\mathbf{P}(\omega)-q\\
\leq &n_iH_{\mu^{(i)}} \bigl(
\bigvee_{t=0}^{q-1}(\Theta^t)^{-1}\mathcal{Q}|\mathcal{F}_{\mathcal{E}}\vee
(\mathcal{F}\times\mathcal{B})^{\infty}_{\mathcal{E}}
 \bigr) +2q^2\log k
+ q n_i \int f\,d\mu^{(i)}.
\end{align*}
Diving by $n_i$, passing to the $\limsup_{i\rightarrow \infty}$ and
using the inequality 10 in  \cite{Zhu}, i.e.,
\begin{align*}
&H_{\mu} \bigl (
\bigvee_{t=0}^{q-1}(\Theta^t)^{-1}\mathcal{Q}|\mathcal{F}_{\mathcal{E}}\vee
(\mathcal{F}\times\mathcal{B})^-_{\mathcal{E}}
 \bigr)\\
  \geq
  &\limsup_{n\rightarrow \infty}H_{\mu^{(i)}} \bigl(
\bigvee_{t=0}^{q-1}(\Theta^t)^{-1}\mathcal{Q}|\mathcal{F}_{\mathcal{E}}\vee
(\mathcal{F}\times\mathcal{B})^{\infty}_{\mathcal{E}} \bigr),
\end{align*}
we get
$$
qP_{\text{pre}}(T,f,\epsilon) \leq H_{\mu} \bigl(
\bigvee_{t=0}^{q-1}(\Theta^t)^{-1}\mathcal{Q}|\mathcal{F}_{\mathcal{E}}\vee
(\mathcal{F}\times\mathcal{B})^-_{\mathcal{E}}
 \bigr)+q\int f\, d\mu.
$$
Dividing by $q$ and letting $q\rightarrow \infty$, we have
\begin{align*}
P_{\text{pre}}(T,f,\epsilon) \leq
h_{\text{pre},\,\mu}^{(r)}(T,\mathcal{Q}) + \int f\,d\mu \leq
h_{\text{pre},\,\mu}^{(r)}(T)+ \int f\,d\mu
\end{align*}
Let $\epsilon\rightarrow 0$, then we have $P_{\text{pre}}(T,f)\leq
h_{\text{pre},\,\mu}^{(r)}(T)+ \int f\,d\mu$ and complete the proof
of Theorem \ref{theorem}.
\end{proof}

\begin{remark}
If $f=0$, then, without any additional assumption, Theorem
\ref{theorem} can be expressed as $h_{\text{pre}}^{(r)}(T)=\sup
\{h_{\text{pre},\,\mu}^{(r)}(T): \mu \in
\mathcal{M}_{\mathbf{P}}^1(\mathcal{E},T) \} $.  In \cite{Zhu}, the
variational principle for the pre-image topological entropy needs a
measurability condition which in most cases cannot be satisfied.
Thus Theorem \ref{theorem} can be regarded as a revised version of
Zhu's. On the other hand, if $\Omega$ consists of only one point,
that is, $\Omega=\{\omega\}$, then by Remark \ref{rem1}, Theorem
\ref{theorem} generalizes Zeng's deterministic variational principle
on the whole space $X$ \cite{Fanping} to any compact invariant
subset $E$.
\end{remark}

\section*{Acknowledgements}
The first author is supported by a grant from Postdoctoral Science
 Research Program of Jiangsu Province (0701049C).
 The second  author is partially supported by the National Natural Science Foundation of China (10571086)
 and National Basic Research Program of China (973 Program) (2007CB814800).

\end{document}